%
%
%
%
%
\documentclass[11pt,a4paper]{article}

\usepackage[leqno]{amsmath}
\usepackage{amssymb,amsthm,upref,amscd}
\usepackage[T1]{fontenc}
\usepackage{times}
\usepackage{cite}
\usepackage{amsfonts}
\usepackage[colorinlistoftodos]{todonotes}

\setcounter{MaxMatrixCols}{30}%
\oddsidemargin0,5cm
\evensidemargin0,5cm
\setlength{\textwidth}{15cm}
\sloppy
\usepackage{color}
\usepackage{hyperref}
\usepackage{graphicx}
\usepackage{wrapfig}

\usepackage[usenames,dvipsnames]{pstricks}
 \usepackage{epsfig}

\newtheorem{thm}{Theorem}[section]
\newtheorem{lem}[thm]{Lemma}

\newtheorem{cor}[thm]{Corollary}

\theoremstyle{definition}

\theoremstyle{remark}

\theoremstyle{remark}
\newtheorem{remark}[thm]{Remark}
\numberwithin{equation}{section}

\newcommand{\R}{{\mathbb R}}

\definecolor{blu}{rgb}{0,0,1}


\newcommand{\al}{\alpha}
\newcommand{\be}{\beta}
\newcommand{\ga}{\gamma}

\newcommand{\la}{\lambda}

\newcommand{\De}{\Delta}
\newcommand{\Ga}{\Gamma}

\newcommand{\eps}{\varepsilon}
\newcommand{\pa}{\partial}

\newcommand{\cC}{{\mathcal C}}

\newcommand{\cP}{{\mathcal P}}

\newcommand{\cS}{{\mathcal S}}

\newcommand{\weakto}{\rightharpoonup}

\newcommand{\beq[1]}{\begin{equation}\label{eq:#1}}
\newcommand{\eeq}{\end{equation}}

\begin{document}
\title{Normalized solutions for nonlinear Schr\"odinger systems}

\author{\sc{Thomas Bartsch}
\and
\sc{Louis Jeanjean}}

\date{}
\maketitle

\begin{abstract}
We consider the existence of \emph{normalized} solutions in $H^1(\R^N) \times H^1(\R^N)$ for systems of nonlinear Schr\"odinger equations which appear in models for binary mixtures of ultracold quantum gases. Making a solitary wave ansatz one is led to coupled systems of elliptic equations of the form
\[
\left\{
\begin{aligned}
-\De u_1 &= \la_1u_1 + f_1(u_1)+\pa_1F(u_1,u_2),\\
-\De u_2 &= \la_2u_2 + f_2(u_2)+\pa_2F(u_1,u_2),\\
u_1,u_2&\in H^1(\R^N),\ N\ge2,
\end{aligned}
\right.
\]
and we are looking for solutions satisfying
\[
\int_{\R^N}|u_1|^2 = a_1,\quad \int_{\R^N}|u_2|^2 = a_2
\]
where $a_1>0$ and $a_2>0$ are prescribed. In the system $\la_1$ and $\la_2$ are unknown and will appear as Lagrange multipliers. We treat the case of homogeneous nonlinearities, i.e.\ $f_i(u_i)=\mu_i|u_i|^{p_i-1}u_i$, $F(u_1,u_2)=\be|u_1|^{r_1}|u_2|^{r_2}$, with positive constants $\be, \mu_i, p_i, r_i$. The exponents are Sobolev subcritical but may be $L^2$-supercritical: $p_1,p_2,r_1+r_2\in]2,2^*[\,\setminus\left\{2+\frac4N\right\}$.
\end{abstract}

{\bf Keywords}: Nonlinear Schr\"odinger systems, solitary waves, normalized solutions, variational methods, constrained linking

{\bf  MSC 2010}: Primary: 35J50; Secondary: 35B08, 35J47, 35P30, 35Q55, 47J30, 58E05


\section{Introduction}\label{sec:intro}

Elliptic systems of the form
\beq[1.0]
\left\{
\begin{aligned}
-\De u_1 &= \la_1u_1 + f_1(u_1)+\pa_1F(u_1,u_2)\\
-\De u_2 &= \la_2u_2 + f_2(u_2)+\pa_2F(u_1,u_2)\\
u_1,u_2&\in H^1(\R^N)
\end{aligned}
\right.
\eeq
have been investigated in the last decades by many authors. Surprisingly little is known about the existence of normalized solutions, i.e.\ solutions that satisfy the constraint
\beq[1.2]
\int_{\R^N} |u_1|^2 = a_1\quad\text{and}\quad \int_{\R^N}| u_2|^2 = a_2
\eeq
with $a_1,a_2>0$ prescribed. One motivation to look for normalized solutions of \eqref{eq:1.0} are coupled systems of nonlinear Schr\"odinger equations
\begin{equation}\label{syst schrod}
\begin{cases}
- i \pa_t \Psi_1 = \De \Psi_1 + g_1(|\Psi_1|)\Psi_1 + \pa_1G(|\Psi_1|^2,|\Psi_2|^2)\Psi_1\\
- i \pa_t \Psi_2 = \De \Psi_2 + g_2(|\Psi_2|)\Psi_2 + \pa_2G(|\Psi_1|^2,|\Psi_2|^2)\Psi_2
\end{cases} \text{in $\R \times \R^N$}.
\end{equation}
Since the \emph{masses}
\[
\int_{\R^N} |\Psi_1|^2 \quad \text{and} \quad \int_{\R^N} |\Psi_2|^2
\]
are preserved along trajectories of \eqref{syst schrod}, it is natural to consider them as prescribed. A solitary wave of \eqref{syst schrod} is a solution having the form
\[
\Psi_1(t,x) = e^{-i\la_1 t} u_1(x) \quad\text{and}\quad \Psi_2(t,x) = e^{-i\la_2 t} u_2(x)
\]
for some $\la_1,\la_2 \in \R$. This ansatz leads to \eqref{eq:1.0} for $(u_1,u_2)$ with $f_1(u_1)=g_1(|u_1|)u_1$, $f_2(u_2)=g_2(|u_2|)u_2$, and $F(u_1,u_2)=\frac12G(|u_1|^2,|u_2|^2)$.

The question of finding normalized solutions is already interesting for scalar equations and provides features and difficulties which are not present when the normalization condition is being dropped. Since the scalar setting will of course be relevant when treating systems, let us recall a few facts. Solutions $u\in H^1(\R^N)$ of
\beq[scalar]
-\De u = \la u + f(u),\quad \int_{\R^N} |u|^2 = a,
\eeq
with $a>0$ fixed can be obtained as critical points of the functional
\[
J(u) = \frac12 \int_{\R^N} |\nabla u|^2 - \int_{\R^N} F(u),
 \quad \text{with } \ F(s) = \int_0^s f(t)\, dt,
\]
constrained to the $L^2$-sphere $S_a:=\left\{u\in H^1(\R^N):\int_{\R^N} |u|^2 = a^2\right\}$, provided $f$ is subcritical. The model nonlinearity is $f(s)=|s|^{p-2}s$ with $2<p<2^*=\frac{2N}{N-2}$. The parameter $\la$ in the equation appears then as Lagrange multiplier.

The best studied cases of \eqref{eq:scalar} correspond to the situation when a solution can be found as a global minimizer of $J$ on $S(a)$ which is the case if $2<p<2+\frac4N$ for the model nonlinearity. This research mainly started with the work of Stuart \cite{St1,St2}. A bit later the Concentration Compactness Principle of P.L.~Lions \cite{Li1,Li2} was used in this type of problems. The case when $J$ is unbounded from below (and from above) on $S_a$, i.e.\ $2+\frac4N<p<2^*$ for the model nonlinearity, has already been much less studied. In this case a mountain pass structure has been exploited in \cite{Je} leading to the existence of one normalized solution. The existence of infinitely many normalized solution has later been proved in \cite{BaVa} where a "fountain" type structure on the $L^2$-sphere has been discovered which is somewhat reminiscent to the one for the free functional from \cite{Ba}; see also \cite{Wi}. More results on normalized solutions for scalar equations can be found in \cite{BeJe, BeJeLu, JeLuWa}. Technical difficulties in dealing with the constrained functional are that the existence of bounded Palais-Smale sequences requires new arguments, that Lagrange multipliers have to be controlled, and that weak limits of Palais-Smale sequences a-priori do not necessarily lie on the same $L^2$-sphere.

The goal of this paper is to find positive radial solutions
of systems like \eqref{eq:1.0} under various growth conditions on $f_1,f_2,F$. In order to keep the ideas and the results simple, and in order to avoid technicalities we only deal with homogeneous nonlinearities $f_1(s)=\mu_1|s|^{p_1-2}s$, $f_2(s)=\mu_2|s|^{p_2-2}s$, and $F(s,t)=\beta|s|^{r_1}|t|^{r_2}$. Thus we look for positive radial solutions
$u_1,u_2\in E:= H^1_{rad}(\R^N) \times H^1_{rad}(\R^N)$ of the system
\beq[1.1]
\left\{
  \begin{array}{ll}
   - \De u_1
    = \la_1 u_1 + \mu_1 |u_1|^{p_1 -2}u_1 + r_1 \be |u_1|^{r_1-2}|u_2|^{r_2}u_1 \smallskip\\
   - \De u_2
    = \la_2 u_2 + \mu_2 |u_2|^{p_2 -2}u_2 + r_2 \be |u_1|^{r_1}|u_2|^{r_2 -2}u_2
 \end{array}
\right.
\eeq
which are $L^2$-normalized in the sense of \eqref{eq:1.2}. Throughout the paper we require $N\geq2$, $p_1,p_2\in]2,2^*,[\,\setminus\left\{2+\frac4N\right\}$, and $\be,\mu_1,\mu_2,r_1,r_2,a_1,a_2>0$ with $2 \le r_1+r_2 < 2^*$. Thus we treat various self-focussing cases and attractive interaction. These constants are prescribed whereas the parameters $\la_1$ and $\la_2$ are unknown and will appear as Lagrange multipliers. The system comes from mean field models for binary mixtures of Bose-Einstein condensates or for binary gases of fermion atoms in degenerate quantum states (Bose-Fermi mixtures, Fermi-Fermi mixtures); see \cite{Ad,Bagnato-etal,esry-etal:1997,malomed:2008} and the references therein. The most famous case is the one of coupled Gross-Pitaevskii equations in dimension $N\le3$ where $p_1=p_2=4$, $r_1=r_2=2$ modeling Bose-Einstein condensation. However models for other ultracold quantum gases use different exponents.

The particular case $p_1=p_2=4$, $r_1=r_2=2$ of coupled Gross-Pitaevskii equations in $\R^3$ is being treated in the companion paper \cite{BaJeSo:2015}. In the present paper we deal with general exponents and distinguish between the cases $p_1,p_2<2+\frac4N$, $p_1<2+\frac4N<p_2$ and $p_1,p_2>2+\frac4N$. The exponent $2+\frac4N$ is critical for the normalized solution problem and will not be treated here. Other results on the existence of prescribed $L^2$-norm solutions for systems can be found in \cite{AlPa, AlBh, Ik, NoTaTeVe:2012, TaTe, NoTaVe}. In these papers the solutions obtained are global minimizers of the associated functional (e.g.\ in the defocusing repelling case $\mu_1,\mu_2,\be<0$), or only the case of small masses $a_1,a_2\sim0$ has been treated (as in \cite{NoTaVe}). In the latter paper the system included a trapping potential or was defined on a bounded domain. Requiring the masses to be small is a bifurcation type result.

Up to our knowledge the results of this paper and of its companion paper \cite{BaJeSo:2015} are the first results where one obtains normalized solutions for systems when the associated functional, here $J$, is unbounded from below on the constraint, and when the masses need not be small.

The paper is organized as follows: In Section~\ref{sec:results} we state and discuss our results. Section~\ref{sec:pre} contains some preliminary results, whereas Sections~\ref{sec:Th1} and \ref{sec:pf-main} are devoted to the proofs of the theorems from Section~\ref{sec:results}.

\section{Statement of Results}\label{sec:results}

We fix $N\geq2$, $p_1,p_2\in(2,2^*)$, and $\be,\mu_1,\mu_2,r_1,r_2,a_1,a_2>0$ with $2\le r_1+r_2<2^*$. We seek for solutions in the space $E:= H^1_{rad}(\R^N) \times H^1_{rad}(\R^N)$ of pairs of radial functions in $H^1(\R^N)$. Our first result on \eqref{eq:1.1}, \eqref{eq:1.2} deals with a case where it is possible to minimize the functional on the constraint.

\begin{thm}\label{th:min}
The problem \eqref{eq:1.1}, \eqref{eq:1.2} has, for some $\la_1,\la_2 < 0$, a solution $(u_1,u_2)\in E$ satisfying $u_1>0$, $u_2>0$ in each of the following cases:
\begin{itemize}
\item[a)] $2\le N \le 4$ and $p_1,p_2,r_1+r_2 < 2+\frac4N$.
\item[b)] $N\ge5$ and $p_1,p_2<2+\frac{2}{N-2}$ and $r_1+r_2 < 2+\frac4N$.
\end{itemize}
\end{thm}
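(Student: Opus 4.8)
The plan is to realize the solution as a global minimizer of the energy functional
\[
J(u_1,u_2)=\frac12\int_{\R^N}\bigl(|\nabla u_1|^2+|\nabla u_2|^2\bigr)-\frac{\mu_1}{p_1}\int_{\R^N}|u_1|^{p_1}-\frac{\mu_2}{p_2}\int_{\R^N}|u_2|^{p_2}-\be\int_{\R^N}|u_1|^{r_1}|u_2|^{r_2}
\]
on the constraint $M:=\{(u_1,u_2)\in E:\int_{\R^N}|u_1|^2=a_1,\ \int_{\R^N}|u_2|^2=a_2\}$, the product of two $L^2$-spheres. First I would check that $J$ is well defined, bounded from below and coercive on $M$. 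For the pure powers this is the Gagliardo--Nirenberg inequality: writing $\ga_p=\frac{N(p-2)}{2p}$ one has $\|u\|_p^p\le C\|u\|_2^{p(1-\ga_p)}\|\nabla u\|_2^{p\ga_p}$ with $p\ga_p=\frac{N(p-2)}{2}<2$ precisely when $p<2+\frac4N$, so on the fixed-mass spheres the $L^2$ factors are constants and each power term is dominated by a sublinear power of $\|\nabla u_i\|_2$. For the coupling term I would either apply Young's inequality to reduce to $\int|u_1|^{r_1+r_2}+\int|u_2|^{r_1+r_2}$ (legitimate since $r_1+r_2<2+\frac4N$) or use a product Gagliardo--Nirenberg estimate. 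This yields coercivity and a finite $m:=\inf_M J$. The extra restriction $p_1,p_2<2+\frac{2}{N-2}$ in case b) is, I expect, exactly what is needed to keep the auxiliary estimates (and, below, the sign of the multipliers) valid when $N\ge5$; in case a) the plain conditions suffice.

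Second, I would produce a minimizing sequence consisting of nonnegative, radially nonincreasing functions: replacing each component by its Schwarz symmetrization lowers (or preserves) $J$ while preserving the two masses, so it costs nothing and ultimately delivers the positivity. Coercivity bounds the sequence in $E$, so up to a subsequence $(u_1^n,u_2^n)\weakto(u_1,u_2)$ in $E$. The decisive gain from working in the radial space is the compact embedding $H^1_{rad}(\R^N)\hookrightarrow L^q(\R^N)$ for every $q\in(2,2^*)$ and $N\ge2$: it forces strong convergence in all the nonlinear terms (the exponents $p_1,p_2$ and, through Young, $r_1+r_2$ all lie in $(2,2^*)$), while the Dirichlet integral is weakly lower semicontinuous. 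Hence $J(u_1,u_2)\le\liminf J(u_1^n,u_2^n)=m$.

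The main obstacle is that $q=2$ is excluded from the compact embedding, so a priori only $\int_{\R^N}|u_i|^2\le a_i$ survives in the limit and mass could leak to infinity. To close this gap I would show that the weak limit lies on $M$. The cleanest route is a strict subadditivity (binding) argument \`a la Lions: a dilation test $u_i\mapsto t^{N/2}u_i(t\,\cdot)$, which preserves the $L^2$-norms, shows that for small $t$ the $L^2$-subcritical nonlinear terms dominate and $J$ becomes negative, so $m<0$; combined with the scaling this yields the strict inequality $m(a_1,a_2)<m(b_1,b_2)+m(a_1-b_1,a_2-b_2)$ for every nontrivial splitting, which rules out dichotomy, while coercivity and the radial symmetry rule out vanishing and translation to infinity. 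Consequently $\int_{\R^N}|u_i|^2=a_i$, the weak limit belongs to $M$ and realizes $m$, and the convergence of the nonlinear terms then upgrades $\nabla u_i^n\to\nabla u_i$ in $L^2$, so the convergence is strong. I expect this mass-preservation step to be the hardest part of the argument.

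Finally I would extract the equations and the multipliers. By the Lagrange multiplier rule the minimizer solves \eqref{eq:1.1} for some $\la_1,\la_2\in\R$, and the strong maximum principle applied to the nonnegative, not-identically-zero components gives $u_1,u_2>0$. To see $\la_1,\la_2<0$ I would combine the equations tested against $u_i$ with the Pohozaev/virial identity coming from the $L^2$-preserving dilation (which must be stationary at the minimizer), together with $m<0$; testing also shows $-\De u_i=\la_i u_i+h_i$ with $h_i\ge0$, $h_i\not\equiv0$, so a positive $H^1$ solution can only exist for $\la_i<0$. I expect that it is precisely at this last point, for $N\ge5$, that the decay of $u_i$ must be fast enough to be incompatible with $\la_i\ge0$, which is why case b) carries the sharper bound $p_1,p_2<2+\frac{2}{N-2}$.
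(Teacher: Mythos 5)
Your setup (minimization of $J$ on $\cS=S(a_1)\times S(a_2)$, coercivity via Gagliardo--Nirenberg, radial compactness in $L^q$ for $q\in\,]2,2^*[$) matches the paper, but the step you yourself flag as hardest is where the proposal breaks down, and your fallback for the sign of the multipliers fails exactly in case b). The strict binding inequality $m(a_1,a_2)<m(b_1,b_2)+m(a_1-b_1,a_2-b_2)$ for \emph{every} nontrivial splitting of the two masses is asserted, not proved, and it does not follow from the dilation test you describe: that test only gives $m(a_1,a_2)<0$. For a two-constraint problem the splittings are vectorial (e.g.\ $b_1=a_1$, $0<b_2<a_2$, where one component concentrates while the other splits, and the levels $m(a_1,b_2)$, $m(0,a_2-b_2)$ mix system and scalar problems), and no homogeneity trick yields strict subadditivity in all these directions simultaneously; this is precisely the difficulty observed by Ikoma \cite{Ik} and explicitly acknowledged in the paper (``Even if we work in the space of radially symmetric functions this question is \ldots challenging''). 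The paper's proof is designed to \emph{avoid} this step altogether: it rules out only the degenerate limits $u_1=0$ and/or $u_2=0$ by comparing with the scalar ground-state levels, via $m(a_1,a_2)\le m_{p_1}^{\mu_1}(a_1)+m_{p_2}^{\mu_2}(a_2)<0$ (Lemma~\ref{lem-ground}), and then recovers the full masses not from subadditivity but from the negativity of the Lagrange multipliers: Lemma~\ref{Palais-Smale} shows that $\la_i<0$ upgrades weak to strong $H^1$ (hence $L^2$) convergence of the Palais--Smale sequence, which is why the paper passes through Ekeland's principle rather than working with a bare minimizing sequence.

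The second gap is your final claim that $-\De u_i=\la_i u_i+h_i$ with $h_i\ge0$, $h_i\not\equiv0$, and $u_i>0$ in $H^1$ forces $\la_i<0$. For $\la_i=0$ this asserts that a positive superharmonic $L^2$ function on $\R^N$ cannot exist, which is true only for $N\le4$ (functions decaying like $|x|^{2-N}$ are in $L^2$ once $N\ge5$); this is Lemma~\ref{lemIkoma}~a), and it is the sole source of the restriction $2\le N\le4$ in case a). For $N\ge5$ the paper must instead invoke the nonlinear Liouville theorem of Gidas (Lemma~\ref{lemIkoma}~b)): $-\De u_1\ge\mu_1u_1^{p_1-1}$ has no positive solution when $p_1-1\le1+\frac{2}{N-2}$, i.e.\ $p_1\le2+\frac{2}{N-2}$ --- which is exactly the sharper hypothesis of case b) that your proposal leaves unexplained beyond a guess. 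So while your intuition (``decay incompatible with $\la_i\ge0$'') points in the right direction, the proposal neither supplies the Liouville input that makes it correct nor provides a working substitute for the compactness step; as written, both the mass-preservation argument and the multiplier-sign argument have genuine holes, and the hypotheses distinguishing cases a) and b) never actually enter your proof.
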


We do not know whether Theorem~\ref{th:min}~a) holds true for all $N\ge2$, i.e.\ whether the hypothesis $p_1,p_2<2+\frac{2}{N-2}$ in \ref{th:min}~b) can be replaced by $p_1,p_2<2+\frac4N$. Setting $S(a) = S_a\cap H^1_{rad}(\R^N) = \{u\in H^1_{rad}(\R^N):|u|_2^2 = a\}$, the solution in Theorem~\ref{th:min} will be a minimizer of the functional
\[
J(u_1,u_2)
 = \frac12\int_{\R^N} |\nabla u_1|^2 + |\nabla u_2|^2 \, dx
     - \int_{\R^N} \frac{\mu_1}{p_1}|u_1|^{p_1}
     + \frac{\mu_2}{p_2}  |u_2|^{p_2}
     + \be |u_1|^{r_1}|u_2|^{r_2} \, dx
\]
constrained to $S(a_1)\times S(a_2) \subset E$.

It is easy to prove that any minimizing sequence $\{(u_1^n,u_2^n)\} \subset S(a_1) \times S(a_2)$ associated to $J$ is bounded. Thus we can assume without restriction that $(u_1^n,u_2^n)\weakto(u_1,u_2)$ weakly in $E$ for some $(u_1,u_2) \in E$. From the weak convergence in $E$ we deduce that $(u_1,u_2)$ satisfies \eqref{eq:1.1} for some associated $(\la_1,\la_2)$. To prove Theorem~\ref{th:min} one still has to show that $(u_1,u_2) \in S(a_1)\times S(a_2)$. Even if we work in the space of radially symmetric functions this question is, with respect to the scalar case, challenging as was already observed in \cite{Ik}. Our proof of Theorem~\ref{th:min} ultimately relies on the use of a Liouville's type result for an associated scalar equation. This is responsible for the restriction that $N \leq 4$ in part~a), or that $p_1,p_2<2+\frac{2}{N-2}$ in part~b).

Our second result deals with the case where $p_2$ and $r_1 + r_2$ are bigger than $2+\frac4N$ so that $J$ is unbounded from below and minimization does not work. We require the following hypotheses on the coefficients.

\begin{itemize}
\item[(H1)]  $2 < p_1 < 2 + \frac4N < p_2 < 2^* $.
\item[(H2)] $2+\frac4N < r_1 + r_2 < 2^*$,\ \ $r_2>2$.
\end{itemize}

Consider the functional $I:H_{rad}^1(\R^N)\to\R$ defined by
$$
I(u) = \frac12\int_{\R^N} |\nabla u|^2 \,dx - \frac{\mu}{p} \int_{\R^N} |u|^p \,dx
$$
constrained to the $L^2$-sphere $S(a)$. For $p\in]2,2^*[\,\setminus\{2+\frac4N\}$ we denote by $m_p^{\mu}(a)$ the ground state level, i.~e.\
$$
m_p^{\mu}(a) = \inf \{ I(u): u \in S(a) \mbox{ such that } (I_{|S(a)})'(u)=0 \}.
$$
We discuss the properties of this ground state level in Lemma~\ref{lem-ground} below.

\begin{thm}\label{th:main}
Assume (H1), (H2) and $2 \le N \le 4$. If
\begin{equation}\label{level}
m_{p_1}^{\mu_1}(a_1) + m_{p_2}^{\mu_2}(a_2) <0,
\end{equation}
then, for some $\la_1 < 0$ and  $\la_2 < 0$, there exists a solution $(u_1, u_2) \in E$ of \eqref{eq:1.1}, \eqref{eq:1.2}, satisfying $u_1>0$, $u_2>0$.
\end{thm}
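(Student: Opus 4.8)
The plan is to obtain $(u_1,u_2)$ as a critical point of $J$ on the product constraint $M:=S(a_1)\times S(a_2)$ by a min--max scheme fitted to the mixed subcritical/supercritical structure. The basic device is the mass--preserving dilation $s\star u(x):=e^{Ns/2}u(e^sx)$, for which $|s\star u|_2=|u|_2$, $\int|\nabla(s\star u)|^2=e^{2s}\int|\nabla u|^2$ and $\int|s\star u|^q=e^{N(q-2)s/2}\int|u|^q$. Setting $\Phi(s):=J(s\star u_1,s\star u_2)$ and writing $\sigma_q:=N(q-2)/2$, hypotheses (H1) and (H2) give $\sigma_{p_1}<2<\sigma_{p_2}$ and $2<\sigma_{r_1+r_2}$, so the $u_2$--self--interaction and the coupling are $L^2$--supercritical while the $u_1$--term is subcritical; consequently $\Phi(s)\to0^-$ as $s\to-\infty$ and $\Phi(s)\to-\infty$ as $s\to+\infty$, with a positive interior maximum. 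This is the mountain--pass behaviour along fibres that replaces minimization in the supercritical directions.

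First I would record from Lemma~\ref{lem-ground} the scalar facts: since $p_1<2+\frac4N$ the level $m_{p_1}^{\mu_1}(a_1)<0$ is attained by a minimizer $w_1\in S(a_1)$, while for the supercritical exponent $p_2$ the level $m_{p_2}^{\mu_2}(a_2)>0$ is a mountain--pass value attained by some $w_2\in S(a_2)$. Using $(w_1,w_2)$ together with the dilation I would construct a min--max class $\Gamma$ on $M$ that combines minimization over the first slot with a fibre mountain pass over the second slot, and set $c:=\inf_{g\in\Gamma}\max_t J(g(t))$. The hypothesis \eqref{level} enters here: testing $c$ with the family built from $(w_1,w_2)$ and discarding the nonpositive coupling term $-\be\int|u_1|^{r_1}|u_2|^{r_2}$ gives $c\le m_{p_1}^{\mu_1}(a_1)+m_{p_2}^{\mu_2}(a_2)<0$. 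The strict negativity of $c$ is exactly what will later force compactness.

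Next I would produce a Palais--Smale sequence $(u_1^n,u_2^n)\subset M$ for $J|_M$ at level $c$ that, in addition, asymptotically satisfies the Pohozaev identity $P(u_1^n,u_2^n)\to0$, where $P=\Phi'(0)$. The clean route is to run the min--max for the augmented functional $\tilde J(s,u_1,u_2):=J(s\star u_1,s\star u_2)$ on $\R\times M$, whose critical points project onto points of $M$ with $P=0$; a deformation respecting the extra variable $s$ yields such a sequence (Jeanjean's monotonicity trick is an alternative). The Pohozaev relation, combined with the energy bound $J(u_1^n,u_2^n)\to c$ and the subcritical Gagliardo--Nirenberg estimate for the $u_1$--term (which is sublinear in $\|\nabla u_1^n\|^2$ because $\sigma_{p_1}<2$), bounds $\|\nabla u_i^n\|_2$ and hence the sequence in $E$. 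From the multiplier equations $-\De u_i^n=\la_i^n u_i^n+(\text{nonlinear terms})+o(1)$, testing against $u_i^n$ and using $P(u_1^n,u_2^n)\to0$, I would show the $\la_i^n$ are bounded, pass to $\la_i^n\to\la_i$, and check $\la_i<0$; the latter uses $c<0$ and makes $-\De-\la_i$ coercive.

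Finally, passing to a weak limit $(u_1,u_2)$ and using the compact embedding $H^1_{rad}(\R^N)\hookrightarrow L^q(\R^N)$ for $2<q<2^*$, all nonlinear terms converge and $(u_1,u_2)$ solves \eqref{eq:1.1} with the limiting multipliers. \emph{The main obstacle is the compactness step: one must exclude loss of mass and prove $|u_i|_2^2=a_i$ rather than $<a_i$.} Here I would use $\la_i<0$ together with the strict inequality in \eqref{level} (which yields a strict sub--additivity of the relevant levels): a splitting of mass escaping to infinity would leave a limiting configuration of energy $\ge c$, contradicting $c<0$. The radial framework and the condition $r_2>2$ in (H2) are used to handle the coupling term in this dichotomy analysis, while the restriction $2\le N\le4$ enters as in Theorem~\ref{th:min}. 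Once $(u_1,u_2)\in S(a_1)\times S(a_2)$ is secured, replacing $u_i$ by $|u_i|$ leaves $J$ unchanged, so we may take $u_i\ge0$, and the strong maximum principle gives $u_1>0$, $u_2>0$, completing the proof.
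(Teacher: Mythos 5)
Your overall scheme does track the paper's: a mountain-pass level on $\cS=S(a_1)\times S(a_2)$, the upper estimate $\ga(a_1,a_2)\le m_{p_1}^{\mu_1}(a_1)+m_{p_2}^{\mu_2}(a_2)<0$ obtained by testing with the scalar ground states and discarding the coupling, the stretched functional $\widetilde J(s,(u_1,u_2))=J(s*u_1,s*u_2)$ to produce a Palais--Smale sequence with $Q(u_1^n,u_2^n)\to0$, boundedness from the coercivity of $J$ on $\{Q=0\}$, and the radial compact embedding. But there is a genuine gap at the compactness step: you only ever establish the \emph{upper} bound $c<0$, and your exclusion of vanishing (``a splitting of mass escaping to infinity would leave a limiting configuration of energy $\ge c$, contradicting $c<0$'') is a non sequitur, because $m_{p_1}^{\mu_1}(a_1)<0$ as well: the scenario $u_2=0$, $u_1$ converging to the scalar ground state on $S(a_1)$, with limit energy $m_{p_1}^{\mu_1}(a_1)<0$, is perfectly compatible with a negative level and with your upper estimate (note $m_{p_1}^{\mu_1}(a_1)<m_{p_1}^{\mu_1}(a_1)+m_{p_2}^{\mu_2}(a_2)$ since $m_{p_2}^{\mu_2}(a_2)>0$). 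What is missing is the \emph{strict lower} bound $\ga(a_1,a_2)>m_{p_1}^{\mu_1}(a_1)$, and in the paper this is where the real work lies: Lemma~\ref{lem:def-c} constructs a continuous radius $c(u_1)$, bounded away from $0$ on bounded subsets of $S(a_1)$ --- and this is precisely where $r_2>2$ and $r_1+r_2>2+\frac4N$ are used, to choose $q$ in \eqref{eq:def-q} with $\ga=\frac{N(r_2q'-2)}{2q'}>2$ --- so that the ridge $B$ satisfies $\inf_BJ\ge J(u_1,0)+\frac34c(u_1)>\min_{S(a_1)}J(\cdot,0)=m_{p_1}^{\mu_1}(a_1)$ (Lemma~\ref{lem:end-point}), while Lemma~\ref{intersection} forces every admissible path to cross $B$. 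This two-sided pinching $m_{p_1}^{\mu_1}(a_1)<\ga(a_1,a_2)<m_{p_2}^{\mu_2}(a_2)$ is exactly what rules out Cases~2 and~3 in Lemma~\ref{lem:ultimate}; your class ``combining minimization in the first slot with a fibre mountain pass in the second'' gestures at this geometry but gives no construction guaranteeing a uniform gap between ridge and endpoints. (Incidentally, the relevant min-max level is negative, so your assertion of a ``positive interior maximum'' along the simultaneous-dilation fibres is neither needed nor, in general, consistent with $c<0$.)

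A second, related misstep is the mechanism for $\la_1,\la_2<0$: you attribute it to $c<0$ and to a subadditivity argument, but in the paper the signs come from the Liouville-type results (Lemma~\ref{lemIkoma}, applied through Lemma~\ref{Step5}) for the nonnegative limit components --- this is where the restriction $2\le N\le4$ actually enters --- and Lemma~\ref{Step5} can only be invoked \emph{after} one knows $u_1\gneqq0$ and $u_2\gneqq0$. The correct logical order is therefore: two-sided level estimates $\Rightarrow$ $u_1\ne0$ and $u_2\ne0$ (Lemma~\ref{lem:ultimate}) $\Rightarrow$ $\la_i<0$ (Lemma~\ref{Step5}) $\Rightarrow$ strong $H$-convergence and $u_i\in S(a_i)$ (Lemma~\ref{Palais-Smale}); no concentration-compactness or subadditivity is needed in the radial framework. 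Your proposal inverts this chain, deducing $\la_i<0$ before nontriviality of the limit, and omits the one estimate ($\ga>m_{p_1}^{\mu_1}(a_1)$) on which the whole exclusion of vanishing of the supercritical component rests.
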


As a corollary of Theorem \ref{th:main} we obtain :

\begin{cor}\label{cor:main}
Assume (H1), (H2) and $2 \le N \le 4$. Then for any $a_2 >0$ there exists $\bar a_1 >0$ such that for any $a_1 \geq \bar a_1$ there exists a positive solution $(u_1, u_2) \in E$ of \eqref{eq:1.1}, \eqref{eq:1.2}, for some $\la_1 < 0$ and  $\la_2 < 0$. In addition $\bar a_1 \to 0$ as $a_2 \to \infty$.
\end{cor}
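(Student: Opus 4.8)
The plan is to deduce the corollary directly from Theorem~\ref{th:main} by verifying, for suitable $a_1$, the sign condition \eqref{level}; the entire argument rests on the behaviour of the two scalar ground state levels $m_{p_1}^{\mu_1}$ and $m_{p_2}^{\mu_2}$ in their respective scaling regimes, which I expect to be supplied by Lemma~\ref{lem-ground}. The structural facts I would isolate first are the signs. By (H1) the exponent $p_1$ is $L^2$-subcritical, $p_1<2+\frac4N$, so $I_{|S(a_1)}$ is bounded below with a strictly negative minimum and $m_{p_1}^{\mu_1}(a_1)<0$ for every $a_1>0$. In contrast $p_2>2+\frac4N$ is $L^2$-supercritical, $I_{|S(a_2)}$ is unbounded below, and its ground state is of mountain--pass type with strictly positive energy, whence $m_{p_2}^{\mu_2}(a_2)>0$ for every $a_2>0$.

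To control the dependence on the masses I would exploit the exact scaling invariance of $I$. For $u\in S(1)$ and $t>0$ the map $u\mapsto t^{N/2}u(t\,\cdot)$ stays on $S(1)$ and rescales the two terms of $I$ by $t^2$ and $t^{N(p-2)/2}$; combining this with the dilation that adjusts the mass yields the homogeneity relation $m_p^{\mu}(a)=a^{\gamma_p}\,m_p^{\mu}(1)$ with $\gamma_p=\frac{2p-N(p-2)}{4-N(p-2)}$. Since $p<2^*$ throughout, the numerator is positive, so $\gamma_{p_1}>0$ in the subcritical regime while $\gamma_{p_2}<0$ in the supercritical one. It follows that $a_1\mapsto m_{p_1}^{\mu_1}(a_1)$ is strictly decreasing with $m_{p_1}^{\mu_1}(a_1)\to-\infty$ as $a_1\to\infty$, whereas $m_{p_2}^{\mu_2}(a_2)\to 0^+$ as $a_2\to\infty$; these monotonicity and limit statements are exactly what I would quote from Lemma~\ref{lem-ground}.

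With these facts the existence part is immediate. I fix $a_2>0$ and set $c:=m_{p_2}^{\mu_2}(a_2)>0$, a fixed constant. Since $m_{p_1}^{\mu_1}(a_1)\to-\infty$, I choose $\bar a_1=\bar a_1(a_2)>0$ with $m_{p_1}^{\mu_1}(\bar a_1)<-c$; then monotonicity in $a_1$ gives $m_{p_1}^{\mu_1}(a_1)+m_{p_2}^{\mu_2}(a_2)\le m_{p_1}^{\mu_1}(\bar a_1)+c<0$ for all $a_1\ge\bar a_1$, i.e.\ \eqref{level} holds. As (H1), (H2) and $2\le N\le4$ are assumed, Theorem~\ref{th:main} then produces, for each such $a_1$, a positive solution $(u_1,u_2)\in E$ with $\la_1,\la_2<0$. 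For the final assertion I would read the threshold off the scaling law: choosing $\bar a_1$ from $m_{p_1}^{\mu_1}(\bar a_1)=-m_{p_2}^{\mu_2}(a_2)$ gives $\bar a_1^{\gamma_{p_1}}|m_{p_1}^{\mu_1}(1)|=m_{p_2}^{\mu_2}(1)\,a_2^{\gamma_{p_2}}$, so $\bar a_1$ is a positive constant times $a_2^{\gamma_{p_2}/\gamma_{p_1}}$; as $\gamma_{p_2}<0<\gamma_{p_1}$ the exponent is negative and hence $\bar a_1\to0$ as $a_2\to\infty$. (Equivalently, without the explicit exponents: for any fixed $a_1^0>0$ the value $m_{p_1}^{\mu_1}(a_1^0)<0$ is fixed while $m_{p_2}^{\mu_2}(a_2)\to0$, so \eqref{level} eventually holds with $a_1=a_1^0$, forcing $\bar a_1\le a_1^0$ for large $a_2$.)

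The main point to emphasise is that there is no genuine obstacle beyond the scalar asymptotics: the hard analysis — bounded Palais--Smale sequences on the product sphere, control of the Lagrange multipliers, and recovery of the constraint in the weak limit — is entirely encapsulated in Theorem~\ref{th:main}. The only real input is the qualitative behaviour of $m_{p_1}^{\mu_1}$ and $m_{p_2}^{\mu_2}$ in the two scaling regimes, which is where I would concentrate; the sole technical nicety is that \eqref{level} is a \emph{strict} inequality, so $\bar a_1$ must be chosen so that the strict sign persists for all $a_1\ge\bar a_1$, as arranged above.
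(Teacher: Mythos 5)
Your proposal is correct and follows essentially the same route as the paper: you verify the strict inequality \eqref{level} from the sign, monotonicity and limit properties of $m_{p_1}^{\mu_1}$ and $m_{p_2}^{\mu_2}$ supplied by Lemma~\ref{lem-ground} and then invoke Theorem~\ref{th:main}; your explicit homogeneity relation $m_p^{\mu}(a)=a^{\gamma_p}m_p^{\mu}(1)$ with $\gamma_p=\frac{2p-N(p-2)}{4-N(p-2)}$ is a correct refinement (it checks out against the formula for $\la_a$ in the proof of Lemma~\ref{lem-ground}) that quantifies the threshold $\bar a_1\sim a_2^{\gamma_{p_2}/\gamma_{p_1}}$, but it is not needed for the qualitative statement. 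You also correctly use $m_{p_1}^{\mu_1}(a_1)\to-\infty$ as $a_1\to\infty$, whereas the paper's two-line proof writes this limit as $a_1\to0$ --- an evident typo, since Lemma~\ref{lem-ground} states the divergence at infinity.
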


With respect to Theorem \ref{th:min} the proof of Theorem \ref{th:main} presents new difficulties. First one needs to identify a possible critical level $\gamma(a_1,a_2)$ where one can find Palais-Smale sequences. The construction of this minimax level, which is of mountain pass type, is the heart of the proof and is carried out in Lemmas \ref{lem:def-c}, \ref{lem:end-point} and \ref{intersection}. By Ekeland's variational principle there exists a Palais-Smale sequence associated to $\gamma(a_1,a_2)$. One then needs to find a bounded Palais-Smale sequence. We manage to find a special Palais-Smale sequence $\{(u_1^n,u_2^n)\} \subset S(a_1) \times S(a_2)$ having the additional property that $Q(u_1^n, u_2^n) \to 0$ where $Q: E \to \R$ is given by
\beq[def-Q]
\begin{aligned}
Q(u_1,u_2)
 &= |\nabla u_1|_2^2 + |\nabla u_2|_2^2
    - \frac{\mu_1}{p_1} N\left( \frac{p_1}{2}-1\right) |u_1|_{p_1}^{p_1}\\
 &\hspace{1cm}
    - \frac{\mu_2}{p_2} N\left(\frac{p_2}{2}-1\right) |u_2|_{p_2}^{p_2}
    - N \be\left(\frac{r_1+r_2}{2}-1\right) \int_{\R^N} |u_1|^{r_1}|u_2|^{r_2} \,dx.
\end{aligned}
\eeq
The condition $Q(u_1,u_2)=0$ corresponds to a natural constraint of Pohozaev type on the solutions of \eqref{eq:1.1}, \eqref{eq:1.2}; see Remark \ref{natural}. To construct a Palais-Smale sequence having the additional property $Q(u_1^n,u_2^n) \to 0$ we employ similar arguments as in \cite{BaVa,BeJeLu,Je,Lu}; see also \cite{AzAvPo,HiIkTa}.

From the property that $Q(u_1^n,u_2^n) \to 0$ we deduce that $\{( u_1^n,u_2^n)\} \subset E$ is bounded. Finally in order to insure the strong convergence of our Palais-Smale sequence we combine the estimate \eqref{level} with the Liouville argument already used in the proof of Theorem 1.1.

In our last result we assume the inequalities $p_1,p_2, r_1 + r_2>2 + \frac4N$.

\begin{thm}\label{th:max}
Assume that $p_1,p_2, r_1 + r_2 > 2 + \frac4N$ and that $2 \leq N \leq 4$.
\begin{itemize}
\item[a)] There exists $\be_1=\be_1(a_1,a_2,\mu_1,\mu_2)>0$ such that if $\be\le\be_1$ then \eqref{eq:1.1}, \eqref{eq:1.2} has a positive solution $(u_1,u_2) \in E$ for some $\la_1<0$ and $\la_2<0$.
\item[b)] There exists $\be_2=\be_2(a_1,a_2,\mu_1,\mu_2)>0$ such that if $\be\ge\be_2$ then \eqref{eq:1.1}, \eqref{eq:1.2} has a positive solution $(u_1,u_2) \in E$ for some $\la_1<0$ and $\la_2<0$.
\end{itemize}
\end{thm}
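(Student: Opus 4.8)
The plan is to run, for both parts (a) and (b), the variational scheme already used for Theorem~\ref{th:main}, and to reduce everything to controlling the signs of the two Lagrange multipliers; the regimes $\be\le\be_1$ and $\be\ge\be_2$ are precisely the two windows in which the energy estimate needed to separate these signs can be verified. Since $p_1,p_2,r_1+r_2>2+\frac4N$, the functional $J$ is unbounded from below on $S(a_1)\times S(a_2)$ but has a mountain--pass geometry along the mass--preserving scaling $u_{i,t}(x)=t^{N/2}u_i(tx)$: indeed $J(u_{1,t},u_{2,t})\to0^+$ as $t\to0$, $J(u_{1,t},u_{2,t})\to-\infty$ as $t\to+\infty$, and $Q$ is the derivative of this curve at $t=1$. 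First I would define a minimax level $\gamma_\be=\gamma_\be(a_1,a_2)>0$ of mountain--pass type, by a construction analogous to Lemmas~\ref{lem:def-c}, \ref{lem:end-point} and \ref{intersection}, and, via the augmented--functional argument of \cite{Je,BaVa,Lu}, produce a Palais--Smale sequence $\{(u_1^n,u_2^n)\}\subset S(a_1)\times S(a_2)$ at level $\gamma_\be$ with the extra property $Q(u_1^n,u_2^n)\to0$; replacing $u_i^n$ by $|u_i^n|$ I may take $u_i^n\ge0$. The property $Q\to0$ forces boundedness in $E$, so up to a subsequence $(u_1^n,u_2^n)\weakto(u_1,u_2)$ in $E$ and, by the compact embeddings $H^1_{rad}(\R^N)\hookrightarrow L^q(\R^N)$ for $2<q<2^*$, strongly in every such $L^q$. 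The multipliers $\la_i^n$ are then bounded, $\la_i^n\to\la_i$, and $(u_1,u_2)\ge0$ solves \eqref{eq:1.1}.

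The only remaining point is that no $L^2$ mass is lost, i.e.\ $(u_1,u_2)\in S(a_1)\times S(a_2)$. Setting $c_i=a_i-|u_i|_2^2\ge0$ and $k_i=\lim_n|\nabla(u_i^n-u_i)|_2^2\ge0$, and testing the $i$-th equation against $u_i^n-u_i$ (all nonlinear terms vanish by the $L^q$ convergence), one gets the identity $k_i=\la_i c_i$. Consequently $\la_i<0$ immediately forces $c_i=k_i=0$, hence strong convergence in $E$, and the strong maximum principle then gives $u_i>0$. Thus the theorem reduces to showing $\la_1,\la_2<0$. Here lies the genuine difficulty for systems: combining $Q(u_1^n,u_2^n)\to0$ with the two multiplier identities yields only the mass--weighted sum $\la_1a_1+\la_2a_2<0$ (every coefficient that arises is negative because the exponents are Sobolev--subcritical), which does not by itself fix the sign of each $\la_i$. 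To split the signs I would use the Liouville--type result behind Theorem~\ref{th:min}: if, say, $\la_2\ge0$, then the limiting equation $-\De u_2=\la_2u_2+\mu_2u_2^{p_2-1}+r_2\be u_1^{r_1}u_2^{r_2-1}$ exhibits a nonnegative superharmonic $H^1$ solution, whence $u_2\equiv0$ and the full mass $a_2$ is lost.

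For part (b) I would exploit that the attractive coupling drives the barrier down: testing the scaling curves shows $\gamma_\be(a_1,a_2)\to0^+$ as $\be\to+\infty$, because the dominant term $-\be\int_{\R^N}|u_1|^{r_1}|u_2|^{r_2}$ depresses $\max_t J(u_{1,t},u_{2,t})$. Since the scalar levels $m_{p_1}^{\mu_1}(a_1),m_{p_2}^{\mu_2}(a_2)$ are positive and independent of $\be$ (Lemma~\ref{lem-ground}), there is $\be_2$ with $0<\gamma_\be<\min\{m_{p_1}^{\mu_1}(a_1),m_{p_2}^{\mu_2}(a_2)\}$ for $\be\ge\be_2$. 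Now if $\la_2\ge0$ the previous step gives $u_2\equiv0$; then either $u_1\equiv0$, which via $k_i=\la_ia_i$ and $Q\to0$ forces $\gamma_\be\le0$, or $u_1\not\equiv0$ solves the scalar equation on some $S(b_1)$ with $b_1\le a_1$, whence $\gamma_\be\ge J(u_1,0)\ge m_{p_1}^{\mu_1}(b_1)\ge m_{p_1}^{\mu_1}(a_1)$ because the scalar level is decreasing in the mass (Lemma~\ref{lem-ground}). Both alternatives contradict $0<\gamma_\be<\min_i m_{p_i}^{\mu_i}(a_i)$, and the symmetric argument rules out $\la_1\ge0$. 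Hence $\la_1,\la_2<0$, the sequence converges strongly, and $(u_1,u_2)$ is the desired positive solution.

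For part (a) the situation is more delicate, and I expect it to be the main obstacle. As $\be\to0$ the problem decouples into the two scalar problems solved by \cite{Je} (each with a negative multiplier), and the attractive coupling gives only $\gamma_\be<m_{p_1}^{\mu_1}(a_1)+m_{p_2}^{\mu_2}(a_2)$ with $\gamma_\be\to m_{p_1}^{\mu_1}(a_1)+m_{p_2}^{\mu_2}(a_2)$; crucially the level no longer falls below the single--component thresholds, so the crude comparison of part~(b) fails and a semi--trivial limit cannot be excluded by energy alone. Instead I would argue by perturbation: treating $(u_1,u_2)$ as a small perturbation of the decoupled pair, I would show that for $\be$ small both multipliers stay strictly negative and no mass escapes, reasoning by contradiction along a hypothetical sequence $\be_n\to0$ for which compactness fails and extracting from it a decoupled limit incompatible with the quantitative behaviour of $\gamma_{\be_n}$ and of the scalar ground state levels; this produces $\be_1>0$ below which $\la_1,\la_2<0$ and strong convergence, hence the positive solution, is restored. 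Throughout, the hard point is exactly this individual control of the two Lagrange multipliers: the Pohozaev identity $Q=0$ pins down only the combination $\la_1a_1+\la_2a_2$, and separating the signs—thereby excluding $L^2$ mass loss and semi--trivial limits—forces one to combine the Liouville theorem with the sharp level estimates available only in the two windows $\be\le\be_1$ and $\be\ge\be_2$.
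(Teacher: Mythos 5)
Your part b) is essentially the paper's own proof. The paper runs exactly the scheme you describe: a mountain pass on $\cS$ between the small-gradient set $\{(u_1,u_2)\in\cS:|\nabla u_1|_2^2+|\nabla u_2|_2^2\le k\}$ and $\{J<0\}$, positivity of the level via the Pohozaev set $\{Q=0\}$, a Palais--Smale sequence with $Q(u_1^n,u_2^n)\to0$ (hence bounded) as in Lemma~\ref{lem:PS-exist}, exclusion of semitrivial weak limits by the level comparison $0<\theta(a_1,a_2)<m_{p_i}^{\mu_i}(a_i)$ for $\be$ large --- which the paper obtains, as you do, by testing $J$ along products of the optimal scalar paths, so that the attractive term $-\be\int|u_1|^{r_1}|u_2|^{r_2}$ depresses the barrier --- and finally the Liouville argument of Lemma~\ref{Step5} (this is where $2\le N\le4$ enters) together with Lemma~\ref{Palais-Smale} to get $\la_1,\la_2<0$ and strong convergence. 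Your bookkeeping (the identity $k_i=\la_i c_i$, the sign of the coefficients in the $Q$-decomposition, the monotonicity of $a\mapsto m_p^{\mu}(a)$ from Lemma~\ref{lem-ground}) is correct, and your $\be_2$ is explicit, as the paper requires.

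Part a), however, contains a genuine gap, and in fact a concrete wrong step. You assert that $\ga_\be\to m_{p_1}^{\mu_1}(a_1)+m_{p_2}^{\mu_2}(a_2)$ as $\be\to0$, so that the level stays strictly above the single-component thresholds. This is false for the one-dimensional mountain-pass level: since $p_1>2+\frac4N$, one reaches $\{J<0\}$ by scaling \emph{one} component only (fix $u_2$ with $|\nabla u_2|_2$ small, so $I(u_2)<\de$, and run $t*u_1$ through the scalar mountain pass; the coupling term is $\le 0$ and only helps), which gives $\ga_\be\le\min_i m_{p_i}^{\mu_i}(a_i)+\de$ for \emph{every} $\be\ge0$ and every $\de>0$. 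Thus for small $\be$ the minimax level sits at (or below) exactly the semitrivial danger value, and the escape scenario --- $u_2^n\weakto0$ with $|u_2^n|_q\to0$ for $q\in\,]2,2^*[$ while the mass $a_2$ spreads, $\la_2\to0$, and the $Q\to0$ identity forcing the level to equal $m_{p_1}^{\mu_1}(a_1)$ --- cannot be excluded by energy; note that the Liouville lemma only controls the weak limit, not the lost mass. Your proposed ``perturbation along $\be_n\to0$'' never engages this scenario: the step ``extract a decoupled limit incompatible with the quantitative behaviour of $\ga_{\be_n}$'' is precisely what fails, since the decoupled semitrivial limit \emph{is} compatible with the level. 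This is why the paper abandons the mountain pass for part a) and uses instead a two-dimensional linking on $\cS$ (carried out in the companion paper \cite{BaJeSo:2015} for $N=3$, $p_1=p_2=4$, $r_1=r_2=2$, and generalizing directly): the two-dimensional geometry forces \emph{both} components across their scalar barriers simultaneously, which separates the minimax level from the one-component energies, and the linking and convergence estimates hold exactly when $\be$ is below an explicit $\be_1$; the paper moreover stresses that $\be_1,\be_2$ are explicit and not obtained by limiting processes, whereas your scheme, even if it could be closed, would only yield a non-quantitative threshold by compactness.
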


We would like to emphasize that the proof yields explicit estimates for $\be_1$ from below and $\be_2$ from above in terms of $p_1,p_2,r_1,r_2$ and $a_1,a_2,\mu_1,\mu_2$, in particular $\be_1$ and $\be_2$ are not obtained by limiting processes.

Theorem~\ref{th:max} is a generalization of the result from \cite{BaJeSo:2015} where the case $N=3$, $p_1=p_2=4$, $r_1=r_2=2$ has been considered. The proof of Theorem~\ref{th:max}~a) is based on a two-dimensional linking on the constraint $S=S(a_1)\times S(a_2)$ whereas the proof of Theorem~\ref{th:max}~b) uses a mountain pass argument on $S$. As in Theorem \ref{th:main} one obtains a special Palais-Smale sequence $\{(u_1^n,u_2^n)\} \subset S(a_1) \times S(a_2)$ at the suspected critical level $\theta(a_1,a_2)$ such that $Q(u_1^n,u_2^n) \to 0$. This leads in particular to its boundedness. In order to obtain the strong convergence an upper bound for $\beta$ is needed in part a), and a lower bound in part b). Concerning estimates for $\be_1$ or $\be_2$ we just mention that in the setting of \cite{BaJeSo:2015} one has  $\be_1\to\infty$ if $\mu_1=\mu_2\to\infty$ and $a_1,a_2$ being fixed. Similarly, $\be_2\to0$ if $\mu_1=\mu_2\to0$ and $a_1,a_2$ are fixed.  Since the proof in \cite{BaJeSo:2015} for the special case $N=3$, $p_1=p_2=4$, $r_1=r_2=2$, generalizes easily we simply refer to \cite{BaJeSo:2015} and do not give any details here.

\begin{remark} The results presented in this paper for $N \ge 2$ can be extended to $N=1$. The difference between the cases $N=1$ and $N \geq 2$ is that the compact embedding $H_{rad}^1(\R^N) \subset L^q(\R^N)$ for $q \in ]2, 2^*[$ only holds when $N \geq 2$. When $N=1$  the corresponding compactness can however be regained by working with Palais-Smale sequences of almost Schwartz-symmetric functions.
In order to avoid additional technicalities we do not deal with the case $N=1$ in this paper but instead refer the reader to \cite{JeLuWa} where a similar issue is treated. The results are identical in the cases $N=1$ and $N \geq 2$ except that in the case $N=1$ one should require in addition that $r_2 >4$ in (H2) (this restriction originates only from the adapted version of Lemma~\ref{lem:def-c}).
\end{remark}

\section{Preliminary results}\label{sec:pre}

Throughout the paper we denote by $H$ the space $H^1_{rad}(\R^N)$ equipped with the standard norm $|\cdot|$, so $E=H\times H$. We also denote by $\cS$ the constraint $S(a_1) \times S(a_2)$. We recall, see for example \cite{BeLi}, that if $u_n \rightharpoonup u$ weakly in $H$ then $u_n \to u$ strongly in $L^q(\R^N)$ for $q \in ]2,2^*[$. \medskip

Let us first observe that the functional $J$ is well defined. For $2 \le r_1+r_2 \le 2^*$ there exists $q>1$ with
\beq[def-q]
\max\left\{\frac{2}{r_1},\frac{2^*}{2^*-r_2}\right\} \le q \le
 \min\left\{\frac{2^*}{r_1},\frac{2}{(2-r_2)^+}\right\},
\eeq
which implies $2 \le r_1q,r_2q' \le 2^*$, hence
\[
\int_{\R^N} |u_1|^{r_1}|u_2|^{r_2} \, dx
 \le |u_1|_{r_1q}^{r_1}\cdot|u_2|_{r_2q'}^{r_2}
 < \infty.
\]
The Gagliardo-Nirenberg inequality
\[
|u|_p \le C(N,p)|\nabla u|_2^\al \cdot|u|_2^{1-\al}\quad
 \text{where } \al=\frac{N(p-2)}{2p}
\]
which holds for $u \in H^1(\R^N)$ and $2 \le p \le 2^*$, implies for $u_1\in S(a_1)$, $u_2\in S(a_2)$:
\beq[est0]
\int_{\R^N} |u_1|^{p_1} \le C(N,p_1,a_1)|\nabla u_1|_2^{\frac{N(p_1-2)}{2}},\qquad
\int_{\R^N} |u_2|^{p_2} \le C(N,p_2,a_2)|\nabla u_2|_2^{\frac{N(p_2-2)}{2}},
\eeq
and
\beq[mixed-est1]
\int_{\R^N} |u_1|^{r_1}|u_2|^{r_2} \, dx
 \le |u_1|_{r_1q}^{r_1}\cdot|u_2|_{r_2q'}^{r_2}
 \le C|\nabla u_1|_2^{\frac{N(r_1q-2)}{2q}}|\nabla u_2|_2^{\frac{N(r_2q'-2)}{2q'}}
\eeq
with $C=C(N,r_1,r_2,a_1,a_2,q)$.

\begin{lem}\label{lem-ground}
Assume that $p \in ]2, 2^*[ \setminus \left\{2+\frac4N\right\}$, and let $\mu>0$ be given. For any $a>0$ there exists a unique couple $(\la_a,u_a) \in \R^+ \times H$ solving
\beq[ground]
- \De u + \la u = \mu |u|^{p-2}u, \quad u \in H^1(\R^N),
\eeq
and such that $u_a > 0$ and $|u_a|_2^2 = a$. Moreover $u_a$ corresponds to the least energy level $m_p^{\mu}(a)$ of the functional
$I : H \to \R$ defined by
$$
I(u) = \frac12\int_{\R^N} |\nabla u|^2 \,dx - \frac{\mu}{p} \int_{\R^N} |u|^p \,dx
$$
constrained to the $L^2$-sphere $S(a)$. If $p \in ]2, 2 + \frac{4}{N}[$ then $m_p^{\mu}(a) <0$ for all $a>0$, the map $a \mapsto m_p^{\mu}(a)$ is strictly decreasing, and $m_p^{\mu}(a) \to - \infty$ as $a \to \infty$.  If $p \in ] 2 + \frac{4}{N}, 2^*[$ then $m_p^{\mu}(a) >0$ for all $a >0$, the map $a \mapsto m_p^{\mu}(a)$ is strictly decreasing and $m_p^{\mu}(a) \to 0$ as $a \to \infty$.
\end{lem}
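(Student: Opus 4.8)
The plan is to reduce everything to the unique positive radial solution of a single normalized equation and then to exploit the scaling invariance of the problem. I would let $U\in H$ be the unique positive solution of $-\De U + U = U^{p-1}$, whose existence is classical (Berestycki--Lions) and whose uniqueness up to translation, removed by working radially, is the theorem of Kwong. For $\la>0$ I would set
\[
w_\la(x) = \left(\frac{\la}{\mu}\right)^{\frac{1}{p-2}} U\big(\sqrt{\la}\,x\big),
\]
so that a direct substitution shows $w_\la$ is the unique positive radial solution of \eqref{eq:ground} and that
\[
|w_\la|_2^2 = \mu^{-\frac{2}{p-2}}\,|U|_2^2\,\la^{\sigma}, \qquad \sigma := \frac{2}{p-2} - \frac N2 .
\]
Since $p\ne 2+\frac4N$ we have $\sigma\ne0$, so $\la\mapsto|w_\la|_2^2$ is a strictly monotone bijection of $]0,\infty[$ onto itself (increasing if $p<2+\frac4N$, decreasing if $p>2+\frac4N$). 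Hence for each $a>0$ there is a unique $\la_a>0$ with $|w_{\la_a}|_2^2=a$, and I would set $u_a:=w_{\la_a}$; the uniqueness for fixed $\la$ together with this bijection gives the uniqueness of the pair $(\la_a,u_a)$.

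The next step would be to identify $u_a$ with the constrained ground state level $m_p^\mu(a)$. Every critical point $u$ of $I$ on $S(a)$ solves \eqref{eq:ground} with $\la$ equal to minus its Lagrange multiplier, so both the Nehari identity $|\nabla u|_2^2+\la|u|_2^2=\mu|u|_p^p$ and the Pohozaev identity $\frac{N-2}{2}|\nabla u|_2^2+\frac N2\la|u|_2^2=\frac{N\mu}{p}|u|_p^p$ hold; eliminating $\la|u|_2^2$ yields $|\nabla u|_2^2=\frac{\mu N(p-2)}{2p}|u|_p^p$ and hence
\[
I(u) = \left(\frac12 - \frac{2}{N(p-2)}\right)|\nabla u|_2^2 =: \kappa_p\,|\nabla u|_2^2 .
\]
The same computation gives $\la|u|_2^2=\mu|u|_p^p\,\frac{2p-N(p-2)}{2p}>0$ because $p<2^*$, so every critical point automatically has $\la>0$, consistent with $\la_a>0$. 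When $p<2+\frac4N$ the Gagliardo--Nirenberg estimate \eqref{eq:est0} makes $I$ coercive and bounded below on $S(a)$, the infimum is attained by compactness of the radial embedding, and after replacing a minimizer by its modulus and invoking uniqueness it must equal $u_a$; thus $m_p^\mu(a)=I(u_a)$. When $p>2+\frac4N$ the functional is unbounded below on $S(a)$, but the least energy among critical points is the mountain pass level attained by the positive ground state $u_a$, which is exactly the scalar analysis of Jeanjean \cite{Je}. In both regimes $m_p^\mu(a)=\kappa_p\,|\nabla u_a|_2^2$.

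It remains to read off the sign, monotonicity and limits from scaling. Since $\kappa_p<0$ for $p<2+\frac4N$ and $\kappa_p>0$ for $p>2+\frac4N$, the displayed identity already gives $m_p^\mu(a)<0$ in the first regime and $m_p^\mu(a)>0$ in the second. From $u_a=w_{\la_a}$ one computes $|\nabla u_a|_2^2=\mu^{-2/(p-2)}|\nabla U|_2^2\,\la_a^{\sigma+1}$, and since $p<2^*$ one checks $\sigma+1=\frac{2}{p-2}-\frac{N-2}{2}>0$. Thus $\la\mapsto\kappa_p\la^{\sigma+1}$ is monotone, and composing with the monotone bijection $a\mapsto\la_a$ shows in both regimes that $a\mapsto m_p^\mu(a)$ is strictly decreasing. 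For the limits, as $a\to\infty$ one has $\la_a\to\infty$ in the subcritical case, so $m_p^\mu(a)=\kappa_p\,|\nabla u_a|_2^2\to-\infty$, while $\la_a\to0$ in the supercritical case, so $m_p^\mu(a)\to0^+$.

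The main obstacle will be the second step, namely the identification $m_p^\mu(a)=I(u_a)$, and in particular the claim that the positive ground state is the \emph{least-energy} critical point, in the $L^2$-supercritical regime $p>2+\frac4N$: there $I$ is unbounded from below on $S(a)$, so $u_a$ is a genuinely unstable mountain pass critical point rather than a minimizer, and establishing that no critical point lies below it requires the scalar variational analysis of \cite{Je}. The remaining ingredients, the uniqueness input of Kwong and the bookkeeping of the scaling exponents, are then routine.
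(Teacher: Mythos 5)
Your proposal is correct, and its first half --- reducing to Kwong's unique positive radial solution $U$ of $-\De U+U=U^{p-1}$, scaling to $w_\la$, and using the strict monotonicity of $\la\mapsto|w_\la|_2^2=\mu^{-2/(p-2)}|U|_2^2\,\la^{\sigma}$ with $\sigma=\frac{2}{p-2}-\frac{N}{2}\ne0$ to produce the unique pair $(\la_a,u_a)$ --- is exactly the paper's argument. Where you genuinely diverge is the second half: the paper obtains the qualitative properties of $m_p^{\mu}(a)$ by citation (in the subcritical case it quotes \cite{St1,St2} and \cite{BeBeGhMi} for negativity, strict monotonicity and $m_p^{\mu}(a)\to-\infty$; in the supercritical case it quotes \cite[Lemma 9.3]{BeJeLu} for the characterization over the Pohozaev manifold $V(a)$ and for monotonicity and $m_p^{\mu}(a)\to0$), whereas you derive them self-containedly: Nehari plus Pohozaev give $I(u)=\bigl(\frac12-\frac{2}{N(p-2)}\bigr)|\nabla u|_2^2$ at every constrained critical point (your identity $|\nabla u|_2^2=\frac{\mu N(p-2)}{2p}|u|_p^p$ is in fact the corrected form of the paper's $V(a)$, where a factor $\frac{\mu}{p}$ was dropped), and with $|\nabla u_a|_2^2=\mu^{-2/(p-2)}|\nabla U|_2^2\,\la_a^{\sigma+1}$, $\sigma+1>0$, the sign, strict monotonicity and both limits follow from an explicit power law in $a$; your exponent bookkeeping checks out, and the observation that $\la>0$ automatically at any critical point since $p<2^*$ is a nice extra. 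Two caveats keep this at the same, not greater, level of rigor than the paper: the identification $m_p^{\mu}(a)=I(u_a)$ in the supercritical regime is still imported from \cite{Je}/\cite{BeJeLu}, as you acknowledge; and in the subcritical case ``attained by compactness of the radial embedding'' is too quick, since $H^1_{rad}(\R^N)\hookrightarrow L^2(\R^N)$ is \emph{not} compact and a weak limit could a priori lose mass, so one needs the standard extra step (strict subadditivity/monotonicity of $a\mapsto\inf_{S(a)}I$ together with $\inf_{S(a)}I<0$) --- precisely the content of the references the paper cites, so this is presentational looseness rather than a gap.
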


\begin{proof}
It is standard (see \cite{Kw}) that the equation
\begin{equation}\label{2.5}
- \De u + \la u = \mu |u|^{p-2}u, \quad u \in H^1(\R^N),
\end{equation}
has, for any $\la >0$, a unique positive radial solution $u_{\la}$. By direct calculations one can show that $u_{\la}$ is given by
$$ u_{\la}(x) = \la^{\frac{1}{p-2}}w(\sqrt{\la}x)$$
where $w$ is the unique positive radial solution of
$$ - \De u + u = \mu |u|^{p-2}u, \quad u \in H^1(\R^N).$$
Since
$$|u_{\la}|_2^2 = \la^{\big(\frac{2}{p-2}- \frac{N}{2}\big)} |w|_2^2$$
for any $a>0$ there exists a unique $\la_a >0$, explicitely given by
$$\la_a = \Big(\frac{a}{|w|_2^2}\Big)^{\frac{2(p-2)}{4 - N(p-2)}},$$
such that $u_{\la_a} \in H^1(\R^N)$ satisfies $|u_{\la_a}|_2^2 = a$ and is the unique positive solution of
$$- \De u + \la_a u = \mu |u|^{p-2}u, \quad u \in H^1(\R^N).$$
The solution $u_{\la_a}$ corresponds to a least energy solution of the functional
$I : H \to \R$ defined by
$$
I(u) = \frac12\int_{\R^N} |\nabla u|^2 \,dx - \frac{\mu}{p} \int_{\R^N} |u|^p \,dx
$$
constrained to the $L^2$-sphere $S(a)$. Here $\la_a >0$ appears as the associated Lagrange parameter. To prove this statement two cases have to be distinguished: \medskip

\noindent{\it Case 1 : $ p \in ]2, 2 + \frac{4}{N}[$.} \\
The least energy solution corresponds to the energy level
$$m_p^{\mu}(a) = \inf_{u \in S(a)} I(u).$$
It is standard \cite{St1,St2}, see also \cite{BeBeGhMi}, that $m_p^{\mu}(a) <0$, that the map $a \mapsto m_p^{\mu}(a)$ is strictly decreasing, and that $m_p^{\mu}(a) \to - \infty$ as $a \to \infty$. \medskip

\noindent{\it Case 2 : $ p \in ]2 + \frac{4}{N}, 2^*[$.} \\
The least energy solution corresponds to the energy level
$$m_p^{\mu}(a) = \inf_{u \in V(a)} I(u).$$
Here
\begin{equation}\label{eq:def-V}
V(a) = \Big\{ u \in S(a) : |\nabla u|_2^2 = \frac{N(p-2)}{2}|u|_p^p \Big\}
\end{equation}
is a natural constraint which contains all the critical points of $I$ constrained to $S(a)$. This has been proved in \cite[Lemma 9.3]{BeJeLu}, see also \cite{Je}. Also in \cite[Lemma 9.3]{BeJeLu} it is shown that the map $a \mapsto m_p^{\mu}(a)$ is strictly decreasing and that $m_p^{\mu}(a) \to 0$ as $a \to \infty$.
\end{proof}

\begin{lem}\label{Palais-Smale}
Let $\{(u_1^n, u_2^n)\} \subset \cS$ be a bounded Palais-Smale sequence of $J$ restricted to $\cS$. Then there exist $(u_1,u_2) \in E$, $(\la_1, \la_2) \in \R \times \R$ and a sequence $\{(\la_1^n, \la_2^n)\} \in \R \times \R$ such that, up to a subsequence:
\begin{itemize}
\item[a)] For $i=1,2$, $u_i^n \weakto u_i$ weakly in $H$ and in $L^2(\R^N)$,\,  $u_i^n \to u_i$ in $L^q(\R^N)$ for any $q \in ]2, 2^*[$;
\item[b)] $(\la_1^n, \la_2^n)  \to (\la_1,\la_2)$ in  $\R \times \R$;
\item[c)] $J'(u_1^n, u_2^n) - \la_1^n (u_1^n,0) - \la_2^n (0, u_2^n) \to 0$ in $E^*$;
\item[d)] $(u_1,u_2)$ is solution of the system \eqref{eq:1.1} where $(\la_1,\la_2)$ are given in b).
\end{itemize}
In addition if $\la_1<0$ then $u_1^n \to u_1$ strongly in $H$. Similarly if $\la_2<0$ then
$u_2^n\to u_2$ strongly in $H$.
\end{lem}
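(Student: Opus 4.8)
The plan is to establish the four items (a)--(d) first, and then the two strong-convergence statements, which are the real content. For the preliminary structure, I would start from the Lagrange multiplier formulation. Since $\{(u_1^n,u_2^n)\}$ is a bounded Palais--Smale sequence for $J$ restricted to $\cS=S(a_1)\times S(a_2)$, the tangent-space characterization of constrained critical points gives that $J'(u_1^n,u_2^n)$ restricted to the tangent space $T_{(u_1^n,u_2^n)}\cS$ tends to zero. Because the constraint functions are $G_i(u_1,u_2)=|u_i|_2^2-a_i$ with $\nabla G_1=(2u_1^n,0)$ and $\nabla G_2=(0,2u_2^n)$, this yields the existence of multipliers $(\la_1^n,\la_2^n)$ with $J'(u_1^n,u_2^n)-\la_1^n(u_1^n,0)-\la_2^n(0,u_2^n)\to 0$ in $E^*$, which is item (c). Testing against $(u_1^n,0)$ and $(0,u_2^n)$ and using that $|u_i^n|_2^2=a_i$ is bounded away from zero gives explicit formulas for $\la_i^n$ in terms of the (bounded) quantities $|\nabla u_i^n|_2^2$, $|u_i^n|_{p_i}^{p_i}$, and the coupling integral; boundedness of the sequence then forces $(\la_1^n,\la_2^n)$ to be bounded, so along a subsequence it converges to some $(\la_1,\la_2)$, giving (b). The weak convergence in (a) is immediate from boundedness in $H$, and the strong $L^q$ convergence for $q\in(2,2^*)$ is exactly the compact embedding $H^1_{rad}(\R^N)\hookrightarrow L^q$ quoted before the lemma.

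Next I would obtain (d). Passing to the limit in (c) using (a) and (b) is the standard weak-continuity argument: the linear and superlinear terms of the form $\int|u_i^n|^{p_i-2}u_i^n\,\phi$ and the coupling terms converge to their limits because $u_i^n\to u_i$ strongly in every $L^q$, $q\in(2,2^*)$, and all exponents appearing (including $r_1q,r_2q'$ via \eqref{eq:mixed-est1}) lie strictly between $2$ and $2^*$, so the nonlinear Nemytskii maps are continuous on these subcritical Lebesgue spaces. The gradient terms pass to the limit weakly. Thus $J'(u_1,u_2)-\la_1(u_1,0)-\la_2(0,u_2)=0$ in $E^*$, which is precisely the statement that $(u_1,u_2)$ solves \eqref{eq:1.1} with these Lagrange parameters. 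Here one must be a little careful: since the masses are not yet known to be preserved, $(u_1,u_2)$ solves the PDE but need not lie on $\cS$. That is acceptable, since (d) only claims that the PDE is satisfied.

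The heart of the lemma is the conditional strong convergence, and I expect this to be the main obstacle because it is where the sign of the Lagrange multiplier enters. I would treat the two coordinates symmetrically and focus on $i=1$ under the assumption $\la_1<0$. The idea is to compare the equation satisfied by $u_1^n$ (from (c)) with the limit equation for $u_1$ (from (d)) and test the difference against $u_1^n-u_1$. Writing $v^n:=u_1^n-u_1\weakto 0$ in $H$, strong $L^q$ convergence kills all the nonlinear contributions (the $\mu_1|u_1^n|^{p_1-2}u_1^n$ term and the coupling term), as does the convergence $\la_1^n\to\la_1$ in the linear piece, and one is left with an identity of the form
\[
|\nabla v^n|_2^2 - \la_1^n |v^n|_2^2 = o(1).
\]
Since $\la_1^n\to\la_1<0$, for large $n$ we have $-\la_1^n\ge \tfrac12(-\la_1)>0$, so the left side controls $\min\{1,-\la_1^n\}\,\|v^n\|^2$ from below, where $\|\cdot\|$ is the full $H^1$-norm; hence $\|v^n\|\to 0$, i.e.\ $u_1^n\to u_1$ strongly in $H$. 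The delicate points are making the subtraction of the two equations rigorous (one works with $J'(u_1^n,u_2^n)(v^n,0)-\big(J'(u_1,u_2)(v^n,0)\big)$ and the multiplier terms $\la_1^n(u_1^n,0)(v^n,0)$ versus $\la_1(u_1,0)(v^n,0)$, using that $\la_1^n u_1^n\cdot v^n - \la_1 u_1\cdot v^n = \la_1^n|v^n|_2^2+o(1)$), and verifying that $-\la_1^n$ staying positive is exactly what converts the coercive quadratic form $|\nabla\cdot|_2^2-\la_1^n|\cdot|_2^2$ into an equivalent $H^1$-norm. If instead $\la_1=0$ this coercivity is lost and only weak convergence survives, which is why the hypothesis $\la_1<0$ (respectively $\la_2<0$) is needed; the two statements are proved identically with the roles of the indices exchanged.
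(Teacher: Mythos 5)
Your proposal is correct, and for items a)--d) it follows the same scaffolding as the paper: multipliers $\la_1^n,\la_2^n$ obtained by testing against $(u_1^n,0)$ and $(0,u_2^n)$ (the paper invokes the Berestycki--Lions characterization of the constrained derivative at this point), boundedness of the multipliers via the Gagliardo--Nirenberg estimates \eqref{eq:est0}, \eqref{eq:mixed-est1}, and the standard passage to the limit for d) --- including your correct observation that $(u_1,u_2)$ solves the PDE without yet being known to lie on $\cS$. Where you genuinely diverge is the strong-convergence step. The paper does \emph{not} subtract the limit equation: it tests the Palais--Smale relation only against $(u_1^n,0)$, obtaining convergence of the quadratic form along the sequence, $|\nabla u_1^n|_2^2-\la_1^n|u_1^n|_2^2\to|\nabla u_1|_2^2-\la_1|u_1|_2^2$, and then exploits $-\la_1>0$ to view the left side as a sum of two weakly lower semicontinuous quantities with positive coefficients, forcing $|\nabla u_1^n|_2\to|\nabla u_1|_2$ and $|u_1^n|_2\to|u_1|_2$ separately; weak convergence plus norm convergence in the Hilbert space then gives strong convergence. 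Your Brezis-type variant --- testing the difference of the two equations against $v^n=u_1^n-u_1$ to get $|\nabla v^n|_2^2-\la_1^n|v^n|_2^2=o(1)$ and concluding by coercivity of the form $|\nabla\cdot|_2^2-\la_1^n|\cdot|_2^2$ --- is equally valid and arguably more direct, but it costs you extra H\"older bookkeeping: you must verify that each nonlinear difference tested against $v^n$ vanishes (for the coupling term this needs $r_1q$ and $r_2q'$ to be handled as in \eqref{eq:mixed-est1}), whereas the paper only needs convergence of the integrals $|u_1^n|_{p_1}^{p_1}$ and $\int_{\R^N}|u_1^n|^{r_1}|u_2^n|^{r_2}\,dx$, which is immediate from the compact embedding. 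Both routes hinge identically on $\la_1<0$, and your closing remark that coercivity is lost when $\la_1=0$ correctly identifies why the sign hypothesis is essential.
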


\begin{proof}
Point a) is trivial. Since $\{(u_1^n, u_2^n)\} \subset H \times H$ is bounded, following Berestycki and Lions \cite[Lemma 3]{BeLi}, we know that $(J|_{\cS})'(u_1^n, u_2^n) \to 0$ in $E^*$ is equivalent to
\[
 J'(u_1^n, u_2^n)- \frac{1}{|u_1^n|_2^2}\langle J'(u_1^n,u_2^n),(u_1^n,0) \rangle (u_1^n,0)
 - \frac{1}{|u_2^n|_2^2}\langle J'(u_1^n,u_2^n),(0,u_2^n) \rangle (0,u_2^n)
 \to 0
\]
in $E^*$. Therefore we obtain
$$
 J'(u_1^n, u_2^n)- \la_1^n (u_1^n,0) - \la_2^n (0,u_2^n) \longrightarrow 0 \quad \text{in } E^*
$$
with
\begin{equation}
\la_1^n= \frac{1}{|u_1^n|_2^2}\left( |\nabla u_1^n| - \mu_1 |u_1^n|_{p_1}^{p_2} - \beta r_1 \int_{\R^N}|u_1^n|^{r_1}|u_2^n|^{r_2} dx\right)
\end{equation}
and
\begin{equation}
\la_2^n= \frac{1}{|u_2^n|_2^2}\left( |\nabla u_2^n| - \mu_1 |u_2^n|_{p_1}^{p_2} - \beta r_2 \int_{\R^N}|u_2^n|^{r_2}|u_1^n|^{r_1} dx\right).
\end{equation}
This proves point c). To prove point b), namely that $\{(\la_1^n,\la_2^n)\} \subset \R\times\R$ is bounded, it suffices to recall that $\{(u_1^n,u_2^n)\} \subset E$ is bounded and to use the estimates \eqref{eq:est0} and \eqref{eq:mixed-est1}. Now from points b) and c) it is standard to deduce d).

It remains to show that if $\la_1 <0$ then $u_1^n \to u_1$ strongly in $H^1(\R^N)$, and in particular in $L^2(\R^N)$. Since
$$
|u_1^n|_{p_1}^{p_1} \to |u_1|_{p_1}^{p_1} \quad \mbox{ and  } \quad \int_{\R^N}|u_1^n|^{r_1}|u_2^n|^{r_2} \, dx \to \int_{\R^N}|u_1|^{r_1}|u_2|^{r_2} \, dx,
$$
and using the fact that $\langle J'(u_1^n, u_2^n) - \la_1^n(u_1^n,0),(u_1^n,0)\rangle  \to \langle J'(u_1,u_2) - \la_1 (u_1,0), (u_1,0)\rangle =0 $, we deduce
\beq[lim]
|\nabla u_1^n|_2^2   - \la_1^n |u_1^n|_2^2  \to |\nabla u_1|_2^2 - \la_1 |u_1|_2^2.
\eeq
As a consequence of the weak convergence $u_i^n \weakto u_i$ we obtain
$$|\nabla u_1|_2^2 \leq \liminf |\nabla u_1^n|_2^2 \quad \mbox{and} \quad |u_1|_2^2  \leq \liminf |u_1^n|_2^2.$$
Finally, since $\la_1^n \to \la_1$ we deduce from \eqref{eq:lim} that
$$|\nabla u_1^n|_2^2 \to |\nabla u_1|_2^2 \quad \mbox{and} \quad | u_1^n|_2^2 \to | u_1|_2^2$$
from which the strong convergence follows.  The case of $\la_2 <0$ is treated in the same way.
\end{proof}

\begin{lem} \label{lemIkoma}
\begin{itemize}
\item[a)] Suppose that $q\in ]1,\frac{N}{N-2}]$ when $N \geq 3$ and $q\in]1,\infty[$ when $N=1,2$. Let $u\in L^q(\R^N)$ be a smooth nonnegative function satisfying $-\De u \ge 0$ in $\R^N$. Then $u \equiv 0$ holds.
\item[b)] For $1<q\le1+\frac2{N-2}$ the inequality $-\De u \ge u^q$ does not have a positive classical solution in $\R^N$.
\end{itemize}
\end{lem}

\begin{proof}
a) can be found in \cite[Lemma A.2]{Ik}; b) is due to \cite{Gi}, a simple proof can be found in \cite{QuSo}.
\end{proof}

\begin{lem}\label{Step5}
Assume $N \leq 4$, or $N\ge5$ and $p_1\le2+\frac2{N-2}$. If $(u_1,u_2)\in E$ is a solution of \eqref{eq:1.1} with $u_1 \gneqq 0$ and $u_2 \ge 0$, then $\la_1 <0$. If $(u_1,u_2)\in E$ is a solution of \eqref{eq:1.1} with $u_2 \gneqq 0$ and $u_1 \ge 0$, then $\la_2 <0$.
\end{lem}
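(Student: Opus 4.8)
The plan is to argue by contradiction, reducing everything to the two Liouville-type statements of Lemma~\ref{lemIkoma}. Since the system \eqref{eq:1.1} is symmetric under exchanging the labels $1$ and $2$ (and the coupling term $|u_1|^{r_1}|u_2|^{r_2}$ is itself symmetric), I would prove only the first assertion in full and obtain the second by relabeling the indices, the $N\ge5$ dimension restriction then being the analogous bound on $p_2$. So suppose, towards a contradiction, that $\la_1\ge0$. Because $u_1\ge0$, $u_2\ge0$ and $\mu_1,\be,r_1>0$, every summand on the right-hand side of the first equation of \eqref{eq:1.1} is nonnegative, whence
\[
-\De u_1 = \la_1 u_1 + \mu_1 u_1^{p_1-1} + r_1\be\, u_1^{r_1-1}u_2^{r_2} \ge 0 \quad\text{in } \R^N.
\]
Before applying Lemma~\ref{lemIkoma} I would first record that $u_1$ is smooth: the nonlinearities are Sobolev subcritical, so a standard elliptic bootstrap starting from $(u_1,u_2)\in H^1(\R^N)^2$ upgrades the pair to a classical ($C^2$) solution.

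For $N\le4$ I would invoke part a) of Lemma~\ref{lemIkoma} with the exponent $q=2$. Indeed $u_1\in H^1(\R^N)\subset L^2(\R^N)$, and $q=2$ is admissible: for $N=1,2$ one has $2\in\,]1,\infty[$, while for $N=3,4$ one has $2\le\frac{N}{N-2}$, with equality exactly at $N=4$. As $u_1$ is a smooth nonnegative function satisfying $-\De u_1\ge0$ and lying in $L^2(\R^N)$, part a) forces $u_1\equiv0$, contradicting $u_1\gneqq0$. Hence $\la_1<0$.

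For $N\ge5$ the $L^2$-integrability no longer suffices, since then $\frac{N}{N-2}<2$ while $H^1$-functions need not lie in any smaller $L^q$; this is precisely where the hypothesis $p_1\le2+\frac2{N-2}$ enters, through part b). Since $u_1\ge0$, $u_1\not\equiv0$ and $-\De u_1\ge0$, the strong maximum principle for superharmonic functions gives $u_1>0$ everywhere. Discarding the nonnegative linear and coupling terms leaves the differential inequality $-\De u_1\ge\mu_1 u_1^{p_1-1}$, and the rescaling $v:=\mu_1^{1/(p_1-2)}u_1$ turns this into $-\De v\ge v^{p_1-1}$ with $v>0$. With $q:=p_1-1$, the assumption $p_1\le2+\frac2{N-2}$ is exactly $1<q\le1+\frac2{N-2}$, so part b) of Lemma~\ref{lemIkoma} excludes such a positive solution — a contradiction. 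Therefore $\la_1<0$ in this regime as well.

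The substantive point, rather than any single hard computation, is matching the thresholds to the two regimes: verifying that $q=2$ remains admissible in part a) up to and including the borderline $N=4$ (where $2=\frac{N}{N-2}$), and that for $N\ge5$ the passage through the strong maximum principle and the subsolution inequality $-\De u_1\ge\mu_1 u_1^{p_1-1}$ lands, after rescaling, exactly inside the nonexistence range $1<q\le1+\frac2{N-2}$ of part b). The regularity upgrade and the use of the strong maximum principle (needed only in the $N\ge5$ case, to pass from $u_1\gneqq0$ to $u_1>0$) are routine given the subcritical growth, and I would treat them briefly.
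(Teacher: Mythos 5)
Your proof is correct and follows essentially the same route as the paper's: assume $\la_1\ge0$, observe that the right-hand side of the first equation is then nonnegative, and conclude $u_1\equiv0$ via Lemma~\ref{lemIkoma} (part a) with $q=2$ for $N\le4$, part b) with $q=p_1-1$ after rescaling for $N\ge5$), contradicting $u_1\gneqq0$. The paper compresses the case split, the regularity bootstrap, the strong maximum principle and the normalization of $\mu_1$ into the single phrase ``we conclude by Lemma~\ref{lemIkoma}'', so your additional details (including the accurate remark that the relabeled second assertion uses the analogous bound on $p_2$) are simply an expanded version of the same argument.
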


\begin{proof}
In the first case since $u_1 \gneqq 0$ satisfies
$$
- \De u_1 =  \la_1 u_1 + \mu_1 u_1^{p_1-1} + r_1\be u_1^{r_1-1} u_2
$$
and since all summands on the right hand side are non negative if $\la_1 \geq 0$, we conclude by Lemma~\ref{lemIkoma} that $u_1=0$. This contradicts the assumption that $u_1 \gneqq 0$. The proof of the other part is identical.
\end{proof}

\section{Proof of Theorem~\ref{th:min}}\label{sec:Th1}

\begin{lem}\label{PSth1}
If $p_1,p_2,r_1+r_2<2+\frac4N$ then $J$ is bounded from below and coercive on $\cS$ for any $a_1,a_2>0$. In addition there exists a bounded Palais-Smale sequence $\{(u_1^n,u_2^n)\}\subset\cS$ which satisfies $(u_1^n)^- \to 0$ and $(u_2^n)^- \to 0$  in $H$. Here $(u_i^n)^- = max\{0,-u_i^n\}$ for $i=1,2.$
\end{lem}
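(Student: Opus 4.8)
The plan is to work on $\cS$, where $|u_1|_2^2=a_1$ and $|u_2|_2^2=a_2$ are fixed, and to estimate the whole non-quadratic part of $J$ in terms of $t_1:=|\nabla u_1|_2$ and $t_2:=|\nabla u_2|_2$ alone, using \eqref{eq:est0} and \eqref{eq:mixed-est1}. The two self-interaction terms contribute the gradient powers $t_1^{N(p_1-2)/2}$ and $t_2^{N(p_2-2)/2}$, whose exponents are $<2$ since $p_1,p_2<2+\frac4N$. For the coupling term the key observation is that, using $\frac1q+\frac1{q'}=1$, the total gradient power in \eqref{eq:mixed-est1} is
\[
\frac{N(r_1q-2)}{2q}+\frac{N(r_2q'-2)}{2q'}=\frac N2(r_1+r_2-2),
\]
which is again $<2$, precisely because $r_1+r_2<2+\frac4N$. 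Setting $s:=t_1^2+t_2^2$ and bounding $t_1^{\ga_1}t_2^{\ga_2}\le s^{(\ga_1+\ga_2)/2}$ (and likewise $t_i^{\al}\le s^{\al/2}$), all three subtracted terms become strictly sublinear powers of $s$, so Young's inequality absorbs them into $\frac12 s$ and yields $J(u_1,u_2)\ge c\,s-C$ on $\cS$ for some $c>0$. This gives both boundedness from below and coercivity.

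\textbf{Part 2 (construction of the sequence; $L^2$ smallness of the negative parts).}
Let $m:=\inf_\cS J$, which is finite by Part~1. Since $|\nabla|u_i||=|\nabla u_i|$ a.e.\ and every term of $J$ depends only on $|u_1|,|u_2|$, we have $J(u_1,u_2)=J(|u_1|,|u_2|)$; hence there is a minimizing sequence $\{(v_1^n,v_2^n)\}\subset\cS$ with $v_i^n\ge0$. I would apply Ekeland's variational principle on the complete $C^1$-manifold $(\cS,d)$: with $\eps_n:=J(v_1^n,v_2^n)-m+\frac1n\to0$ it produces $\{(u_1^n,u_2^n)\}\subset\cS$ with $J(u^n)\le J(v^n)$, $d(u^n,v^n)\le\sqrt{\eps_n}$, and $(J|_\cS)'(u^n)\to0$. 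By the coercivity just proved this sequence is bounded, hence a bounded Palais--Smale sequence. Because $v_i^n\ge0$ we have the pointwise bound $(u_i^n)^-\le|u_i^n-v_i^n|$, so $|(u_i^n)^-|_2\le|u_i^n-v_i^n|_2\le d(u^n,v^n)\to0$: the negative parts already tend to $0$ in $L^2(\R^N)$.

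\textbf{Part 3 (upgrading to $H$).}
The genuinely delicate point is to control $|\nabla(u_i^n)^-|_2$, since the comparison above fails for the gradient norm ($u\mapsto u^-$ is not nonexpansive in $H$). Here I would use the equation. By Lemma~\ref{Palais-Smale} we have $J'(u^n)-\la_1^n(u_1^n,0)-\la_2^n(0,u_2^n)\to0$ in $E^*$ with $\{\la_i^n\}$ bounded. Testing this relation against $((u_1^n)^-,0)$ and evaluating each term on $\{u_1^n<0\}$ (where it becomes $-|\nabla(u_1^n)^-|_2^2$, $-\mu_1|(u_1^n)^-|_{p_1}^{p_1}$, $-r_1\be\int((u_1^n)^-)^{r_1}|u_2^n|^{r_2}$, and $-\la_1^n|(u_1^n)^-|_2^2$, respectively) gives
\[
|\nabla(u_1^n)^-|_2^2=\mu_1|(u_1^n)^-|_{p_1}^{p_1}+r_1\be\int_{\R^N}((u_1^n)^-)^{r_1}|u_2^n|^{r_2}\,dx+\la_1^n|(u_1^n)^-|_2^2+o(1).
\]
Since $\{(u^n)\}$ is bounded in $E$ and $|(u_1^n)^-|_2\to0$, the Gagliardo--Nirenberg inequality forces $|(u_1^n)^-|_{p_1}\to0$ and the coupling integral $\to0$, while the Lagrange term $\to0$ by boundedness of $\la_1^n$; hence $|\nabla(u_1^n)^-|_2\to0$ and therefore $(u_1^n)^-\to0$ in $H$, and likewise for $(u_2^n)^-$. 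I expect this last testing-against-the-negative-part step, rather than the elementary coercivity estimate, to be the main obstacle, as it is exactly where the gradient of the negative part must be recovered from the nonlinear lower-order quantities.
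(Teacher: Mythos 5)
Your proposal is correct, and in Parts 1 and 2 it is essentially identical to the paper's proof: the same Gagliardo--Nirenberg estimates \eqref{eq:est0}, \eqref{eq:mixed-est1}, with your exponent count $\frac{N(r_1q-2)}{2q}+\frac{N(r_2q'-2)}{2q'}=\frac N2(r_1+r_2-2)<2$ being exactly the paper's \eqref{eq:mixed-est2}, followed by the symmetrization $J(u_1,u_2)=J(|u_1|,|u_2|)$ and Ekeland's principle applied near a nonnegative minimizing sequence. Where you genuinely diverge is Part 3, and the divergence is to your credit. The paper's proof ends with: $(u_1^n,u_2^n)-(v_1^n,v_2^n)\to0$ in $E$, \emph{in particular} $(u_i^n)^-\to0$ in $H$ --- i.e.\ it reads the $H$-convergence of the negative parts directly off the Ekeland distance estimate. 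As you correctly observe, this implication is not literally valid at the gradient level: from $v_i^n\ge0$ and $u_i^n-v_i^n\to0$ in $H$ one gets the pointwise bound $(u_i^n)^-\le|u_i^n-v_i^n|$ and hence smallness in $L^2$ (and in $L^p$ by interpolation), but the map $u\mapsto u^-$ is not uniformly continuous on bounded subsets of $H^1$ (the gradient mass of $v_i^n$ could a priori concentrate near its zero set, where a small $H^1$ perturbation can flip the sign), so the gradient of the negative part requires a separate argument. Your repair --- testing the relation of Lemma~\ref{Palais-Smale}(c) against $((u_1^n)^-,0)$, which is legitimate since $|(u_1^n)^-|_{H^1}\le|u_1^n|_{H^1}$ keeps the test functions bounded, and then killing the right-hand side of
\[
|\nabla(u_1^n)^-|_2^2=\mu_1|(u_1^n)^-|_{p_1}^{p_1}+r_1\be\int_{\R^N}\bigl((u_1^n)^-\bigr)^{r_1}|u_2^n|^{r_2}\,dx+\la_1^n|(u_1^n)^-|_2^2+o(1)
\]
via boundedness of $\la_1^n$, the $L^2$-smallness from Part 2, Gagliardo--Nirenberg, and $r_1q,\,r_2q'\in[2,2^*]$ --- is sound; I checked the signs and each limit. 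Two closing remarks: for the use the paper actually makes of this lemma (nonnegativity of the weak limit in the proof of Theorem~\ref{th:min}), the $L^2$-smallness alone suffices, so the paper's shortcut is harmless downstream; but for the lemma \emph{as stated} (convergence of $(u_i^n)^-$ in $H$), your Part 3 supplies the step the paper leaves implicit, at the price of invoking the Lagrange-multiplier structure of Lemma~\ref{Palais-Smale}, which the paper's one-line argument does not need.
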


\begin{proof}
Observe that $\frac{N(p_i-2)}{2} < 2$ because $p_i < 2+\frac4N$, $i=1,2$, and that
\beq[mixed-est2]
\frac{N(r_1q-2)}{2q} + \frac{N(r_2q'-2)}{2q'} < 2
\eeq
since $r_1+r_2<2+\frac4N$. It follows easily  from \eqref{eq:est0}, \eqref{eq:mixed-est1} and \eqref{eq:mixed-est2} that $J$ is bounded below and coercive on $\cS$. \medskip

Now let $\{(v_1^n, v_2^n)\} \subset \cS$ be a minimizing sequence for $J$ on $\cS$. By the coerciveness of $J$ it is bounded and also without restriction we can assume that $v_1^n \geq 0$ and $v_2^n \geq 0$. Using Ekeland's variational principle \cite{Ek,Gh} we deduce that there exists a minimizing sequence $\{(u_1^n, u_2^n)\} \subset S$ which is a Palais-Smale sequence for $J$ restricted to $\cS$ and which satisfies $(u_1^n,u_2^n) - (v_1^n,v_2^n) \to 0$ in $E$. In particular $(u_1^n)^- \to 0$ and $(u_2^n)^- \to 0$ in $H$.
\end{proof}

\begin{proof}[Proof of Theorem \ref{th:min}]
From Lemmas \ref{lem-ground}  and \ref{PSth1} we deduce the existence of a bounded Palais-Smale sequence $\{(u_1^n, u_2^n)\} \subset \cS$ such that $(u_1^n,u_2^n) \rightharpoonup (u_1,u_2)$ weakly in $E$ with $u_1 \geq 0$ and $u_2 \geq 0$. We also obtain a couple $(\la_1,\la_2) \in \R \times \R$ for which $(u_1,u_2)$ is solution of the system \eqref{eq:1.1}. To conclude the proof it remains to show that $u_1^n \to u_1$ and $u_2^n \to u_2$ in $H$. Indeed if this is the case then we both have $u_1 \in S(a_1)$ and $u_2 \in S(a_2)$ and that $(u_1,u_2)$ is a least energy solution. In addition by the strong maximum principle, applied separately to each equation, we obtain that $u_1 >0$ and $u_2 >0$. In order to show the strong convergence in $H$ we define
$$m(a_1,a_2): = \inf_{(u_1,u_2) \in \cS} J(u_1,u_2).$$
Since $\beta \geq 0$ we clearly have
\beq[estminimum]
m(a_1,a_2) \leq m_{p_1}^{\mu_1}(a_1) + m_{p_2}^{\mu_2}(a_2) <0
\eeq
where the last inequality comes from Lemma \ref{lem-ground}. We now distinguish four cases and we show that only the last one may occur: \medskip

\noindent{\it Case 1: $u_1 = 0$ and $u_2 = 0$.} \\
Then $|u_1^n|_{p_1}^{p_1} \to 0$, $|u_2^n|_{p_2}^{p_2} \to 0$ and $\int_{\R^N}|u_1^n|^{r_1}|u_2^n|^{r_2}\, dx \to 0$. Thus
$\limsup J(u_1^n,u_2^n) \geq 0$ which contradicts \eqref{eq:estminimum}. \medskip

\noindent{\it Case 2: $u_1 = 0$ and $u_2 \neq 0$.} \\
Then $$\limsup J(u_1^n,u_2^n) \geq \frac{1}{2}|\nabla u_2|_2^2 - \frac{\mu_2}{p_2}|u_2|_{p_2}^{p_2} \geq m(\bar{a}_2)$$
where $\bar{a}_2 := |u_2|_2^2 \le a_2$. By Lemma~\ref{lem-ground} we know that
$m(\bar{a}_2) \ge m(a_2)$, and since $m(a_1)<0$ we have a contradiction with \eqref{eq:estminimum}. \medskip

\noindent{\it Case 3: $u_1 \ne 0$ and $u_2 = 0$.} \\
Reversing the r\^ole of $u_1$ and $u_2$ we obtain a contradiction similar to case 2. \medskip

\noindent{\it Case 4: $0<|u_1|_2^2=\bar{a}_1\le a_1\,$ or $\,0<|u_2|_2^2=\bar{a}_2\le a_2$.}\\
Necessarily this case occurs. Now using Lemma \ref{Step5} we deduce that $\la_1<0$ and $\la_2<0$. Then Lemma~\ref{Palais-Smale} implies $u_1^n \to u_1$ and $u_2^n \to u_2$ in $H$. At this point the proof of the theorem is completed.
\end{proof}

\section{Proof of Theorem~\ref{th:main}}\label{sec:pf-main}

For $c >0$ we define the sets
$$ A_c = \{ u_2 \in S(a_2) : |\nabla u_2|_2^2 \leq c \} \quad \mbox{and}
 \quad B_c = \{ u_2 \in S(a_2) : |\nabla u_2|_2^2 = 2 c \}.$$
Setting $J_{u_1}(u_2) := J(u_1,u_2)$ for $u_1 \in S(a_1)$ we observe that
$$
J_{u_1} (u_2)
 = J_{u_1}(0) + \frac12 \int_{\R^N} |\nabla u_2|^2 \, dx
   - \frac{\mu_2}{p_2} \int_{\R^N} |u_2|^{p_2}\,dx - \be \int_{\R^N} |u_1|^{r_1} |u_2|^{r_2}\, dx.
$$

\begin{lem}\label{lem:def-c}
There exists a continuous function $c : S(a_1) \to \R$, $u_1 \to c(u_1)$, such that
\[
\sup_{A_{c(u_1)}} J_{u_1} < \inf_{B_{c(u_1)}} J_{u_1} \qquad
 \text{for all $u_1 \in S(a_1)$.}
\]
The function $c$ is bounded, and it is bounded away from $0$ on bounded subsets of $S(a_1)$.
\end{lem}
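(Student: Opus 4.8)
The plan is to reduce the whole statement to a one–dimensional analysis in the variable $t=|\nabla u_2|_2$. Since for $u_2\in S(a_2)$ both nonlinear terms in $J_{u_1}(u_2)$ are nonnegative and enter with a minus sign, one has the free upper bound
\[
J_{u_1}(u_2)\le J_{u_1}(0)+\tfrac12|\nabla u_2|_2^2 ,
\]
which gives at once $\sup_{A_c}J_{u_1}\le J_{u_1}(0)+\tfrac12 c$, because $|\nabla u_2|_2^2\le c$ on $A_c$. On $B_c$ the value $|\nabla u_2|_2^2=2c$ is frozen, so there I instead need a \emph{lower} bound, obtained by estimating the two nonlinear terms from above. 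Thus I would first record, on $S(a_2)$, estimates of the form
\[
\frac{\mu_2}{p_2}|u_2|_{p_2}^{p_2}\le C_2\,|\nabla u_2|_2^{\ga}
\qquad\text{and}\qquad
\be\int_{\R^N}|u_1|^{r_1}|u_2|^{r_2}\,dx\le D(u_1)\,|\nabla u_2|_2^{\sigma},
\]
with exponents $\ga,\sigma>2$ and with $D(u_1)$ depending continuously on $u_1$.

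The first estimate is exactly \eqref{eq:est0}, with $\ga=\frac{N(p_2-2)}{2}$, and $\ga>2$ holds precisely because $p_2>2+\frac4N$ by (H1). The second is \eqref{eq:mixed-est1} with $u_1$ frozen: writing $D(u_1)=\be\,C\,|u_1|_{r_1q}^{r_1}$ and applying Gagliardo–Nirenberg only to the $u_2$–factor gives $\sigma=\frac{N(r_2q'-2)}{2q'}$. The crucial point, and the place where $r_2>2$ is used, is to choose the Hölder exponent $q$ so that $\sigma>2$. Since $r_2>2$ forces $(2-r_2)^+=0$, the admissible range \eqref{eq:def-q} has lower endpoint $q_{\min}=\max\{2/r_1,\,2^*/(2^*-r_2)\}$; as $\sigma$ is decreasing in $q$, it suffices to evaluate it at $q=q_{\min}$. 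If $q_{\min}=2/r_1$ one finds $\sigma=\frac N2(r_1+r_2-2)>2$ by (H2), while if $q_{\min}=2^*/(2^*-r_2)$ one finds $\sigma=r_2>2$. In either case $\sigma>2$; this is what makes the interaction term subquadratic in $|\nabla u_2|_2$.

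With $\ga,\sigma>2$ the estimates above yield
\[
\inf_{B_c}J_{u_1}\ge J_{u_1}(0)+c-C_2(2c)^{\ga/2}-D(u_1)(2c)^{\sigma/2},
\]
so that the desired inequality $\sup_{A_c}J_{u_1}<\inf_{B_c}J_{u_1}$ follows as soon as
\[
C_2(2c)^{\ga/2}+D(u_1)(2c)^{\sigma/2}<\tfrac12 c .
\]
Because $\ga/2>1$ and $\sigma/2>1$, the left–hand side is $o(c)$ as $c\to0$. I would make this quantitative by setting $c(u_1)=\min\{c_\ast,c_{\ast\ast}(u_1)\}$, where $c_\ast=\bigl(1/(8C_2\,2^{\ga/2})\bigr)^{1/(\ga/2-1)}$ is a universal constant ensuring $C_2(2c)^{\ga/2}\le c/8$, and $c_{\ast\ast}(u_1)=\bigl(1/(8D(u_1)\,2^{\sigma/2})\bigr)^{1/(\sigma/2-1)}$ ensures $D(u_1)(2c)^{\sigma/2}\le c/8$; then the bracketed sum is at most $c/4<c/2$ at $c=c(u_1)$, giving the strict inequality.

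It remains to check the three structural properties. Continuity of $c$ follows from the continuity of $u_1\mapsto|u_1|_{r_1q}^{r_1}$, hence of $D$, on $S(a_1)$ through the embedding $H\hookrightarrow L^{r_1q}$; note $D(u_1)>0$ on $S(a_1)$ since $u_1\neq0$, so $c_{\ast\ast}$ is finite, positive and continuous. Boundedness of $c$ is automatic from $c(u_1)\le c_\ast$. For the last property, on a bounded subset of $S(a_1)$ the quantity $|\nabla u_1|_2$ is bounded, so Gagliardo–Nirenberg bounds $D(u_1)$ from above there, whence $c_{\ast\ast}$, and therefore $c$, is bounded away from $0$. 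The main obstacle is the middle step: securing $\sigma>2$ for an admissible $q$, which is exactly what the hypothesis $r_2>2$ buys.
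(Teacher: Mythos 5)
Your proof is correct and takes essentially the same route as the paper: the same upper bound $J_{u_1}(u_2)\le J_{u_1}(0)+\tfrac12 c$ on $A_c$, the same Gagliardo--Nirenberg lower bound on $B_c$ with exponents $\frac{N}{2}(p_2-2)>2$ and $\sigma=\frac{N(r_2q'-2)}{2q'}>2$, the same absorption of each bad term into $c/8$, and the same explicit definition of $c(u_1)$ as a minimum of two quantities, yielding continuity, boundedness, and the lower bound on bounded sets. The only (equivalent) variation is how you certify that an admissible $q$ with $\sigma>2$ exists: the paper checks $\frac{2N}{2N-r_2N+4}>\max\left\{\frac{2}{r_1},\frac{2^*}{2^*-r_2}\right\}$ directly from $r_1+r_2>2+\frac4N$ and $r_2>2$, while you evaluate $\sigma$ at the lower endpoint $q_{\min}$ using monotonicity in $q$ --- the same computation in disguise (only note that for $N=2$, where $2^*=\infty$ and $q_{\min}$ can degenerate to $1$, one should take $q$ slightly above the endpoint, which your strict inequalities permit).
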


\begin{proof}
Fixing $u_1 \in S(a_1)$ we first observe that for $u_2 \in A_c$ there holds:
$$
J_{u_1}(u_2) \leq J_{u_1}(0) + \frac12\int_{\R^N} |\nabla u_2|^2 \, dx
 \leq J_{u_1}(0) + \frac12c.
$$
For $u_2 \in B_c$ and $q$ as in \eqref{eq:def-q}, and $\ga=\frac{N(r_2q'-2)}{2q'}$
we have, using the Gagliardo-Nirenberg inequality, see \eqref{eq:est0}, \eqref{eq:mixed-est1},
$$
\begin{aligned}
J_{u_1}(u_2)
 &\ge
   J_{u_1}(0) + c
    - \frac{\mu_2}{p_2} c(p_2,N)|\nabla u_2|_2^{\frac{N}{2}(p_2-2)}
       |u_2|_2^{p_2(1-\frac{N}{2})+N}
    - \be |u_1|_{r_1q}^{r_1}\cdot|u_2|_{r_2q'}^{r_2} \\
 &= J_{u_1}(0) + c - K_1c^{\frac{N}{4}(p_2-2)}
      - K_2|u_1|_{r_1q}^{r_1}\cdot c^{\frac{\gamma}2}
\end{aligned}
$$
Here $K_1=K_1(N,\mu_2,p_2,a_2)$ and $K_2=K_2(N,\be,r_2,a_2,q)$. Observe that $\frac{N}{4}(p_2-2)>1$ because $p_2>2+\frac4N$, and $\ga>2$ provided $q<\frac{2N}{2N-r_2N+4}$. We can choose $q$ satisfying this inequality and \eqref{eq:def-q} because
\[
\frac{2N}{2N-r_2N+4} > \max\left\{\frac{2}{r_1},\frac{2^*}{2^*-r_2}\right\}
\]
which is a consequence of $r_1+r_2>2+\frac4N$ and  $r_2>2$.

Observe that $K_1c^{\frac{N}{4}(p_2-2)} \le \frac18c$ if $c>0$ is small because $\frac{N(p_2-2)}{4}>1$, and $K_2|u_1|_{r_1q}^{r_1}\cdot c^{\frac\ga2} \le \frac18c$ if $c>0$ is small because $\ga>2$. More precisely, if $c:S(a_1)\to\R^+$ satisfies
\beq[def-c]
c(u_1) \le \min\left\{(8K_1)^{-\frac{4}{N(p_2-2)-4}},
             (8K_2)^{-\frac{2}{\ga-2}}\cdot|u_1|_{r_1q}^{-\frac{2r_1}{\ga-2}}\right\},
\eeq
then we have for $u_2\in B_{c(u_1)}$:
\beq[JonB-est]
\begin{aligned}
J_{u_1}(u_2)
 &\ge  J_{u_1}(0) + c(u_1) - \frac18c(u_1) - \frac18c(u_1)\\
 &> J_{u_1}(0) + \frac12c(u_1)
 \ge \sup_{A_{c(u_1)}} J_{u_1}.
\end{aligned}
\eeq
Clearly we may define a continuous function $c:S(a_1)\to\R^+$ satisfying \eqref{eq:def-c} and which is bounded away from $0$ on bounded subsets of $S(a_1)$. In fact, the right hand side of \eqref{eq:def-c} may serve as definition. By \eqref{eq:def-c} $c$ is also bounded above.
\end{proof}

Now we set
$$A(u_1) = A_{c(u_1)}, \quad B(u_1) = B_{c(u_1)}$$
and
$$B = \{ (u_1,u_2): u_1 \in S(a_1),\, u_2 \in B(u_1)\}.$$
Let $\underline{u} \in S(a_1)$ be such that
\begin{equation}\label{star5}
J(\underline{u},0) = \min_{u \in S(a_1)} J(u,0) <0.
\end{equation}
The existence of $\underline{u}$ is insured by Lemma \ref{lem-ground}.

\begin{lem}\label{lem:end-point}
There exist $\overline{v} \in A(\underline{u})$ and $\overline{w} \in S(a_2)\setminus A_{2c(\underline{u})}$ such that
$$
\max \{J(\underline{u}, \overline{v}), J(\underline{u}, \overline{w}) \}
 < \inf_{(u_1,u_2) \in B}  J(u_1,u_2).
$$
\end{lem}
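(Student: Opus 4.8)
The plan is to produce both endpoints inside the single slice $J_{\underline u}$, using the $L^2$-preserving dilations $v_t(x):=t^{N/2}v(tx)$, $t\in\,]0,\infty[$, of a fixed $v\in S(a_2)$. These satisfy $v_t\in S(a_2)$, $|\nabla v_t|_2^2=t^2|\nabla v|_2^2$ and $|v_t|_{p_2}^{p_2}=t^{N(p_2-2)/2}|v|_{p_2}^{p_2}$. The endpoint $\overline v$ will be $v_t$ for small $t$ (deep in the well $A(\underline u)$) and $\overline w$ will be $v_t$ for large $t$ (beyond the barrier, outside $A_{2c(\underline u)}$). Writing $I_1(u_1):=J(u_1,0)=\frac12|\nabla u_1|_2^2-\frac{\mu_1}{p_1}|u_1|_{p_1}^{p_1}$, recall from \eqref{star5} that $J(\underline u,0)=\min_{S(a_1)}I_1=m_{p_1}^{\mu_1}(a_1)$.

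The heart of the argument is to show that the barrier level lies strictly above this value, i.e.\ that there is $\eta>0$ with $\inf_B J\ge m_{p_1}^{\mu_1}(a_1)+\eta$. Estimate \eqref{eq:JonB-est} from the proof of Lemma~\ref{lem:def-c} gives, for every $(u_1,u_2)\in B$,
\[
J(u_1,u_2)\;\ge\;I_1(u_1)+\tfrac12 c(u_1).
\]
I then split $S(a_1)$ according to the size of $|\nabla u_1|_2$. Since $p_1<2+\frac4N$, the Gagliardo--Nirenberg estimate \eqref{eq:est0} makes $I_1$ coercive and bounded below on $S(a_1)$ (Lemma~\ref{lem-ground}), so there is $R>0$ with $I_1(u_1)\ge m_{p_1}^{\mu_1}(a_1)+1$ whenever $|\nabla u_1|_2>R$. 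On the complementary bounded set $\{|\nabla u_1|_2\le R\}$, Lemma~\ref{lem:def-c} yields $c(u_1)\ge\delta>0$, whence $I_1(u_1)+\frac12 c(u_1)\ge m_{p_1}^{\mu_1}(a_1)+\frac{\delta}{2}$. Taking $\eta:=\min\{1,\delta/2\}>0$ covers both regimes and gives the claimed gap.

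With this gap in hand, both endpoints follow from the scaling behaviour of $J_{\underline u}(v_t)$. As $t\to0$ the gradient term, the $p_2$ term (exponent $N(p_2-2)/2>2$) and the coupling term (bounded by $C(\underline u)|\nabla v_t|_2^{\gamma}$ with $\gamma>2$ via \eqref{eq:mixed-est1}) all tend to $0$, so $J(\underline u,v_t)\to m_{p_1}^{\mu_1}(a_1)$ while $|\nabla v_t|_2^2\to0$; choosing $t$ small enough that $J(\underline u,v_t)<m_{p_1}^{\mu_1}(a_1)+\eta$ and $|\nabla v_t|_2^2\le c(\underline u)$, I set $\overline v:=v_t\in A(\underline u)$. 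As $t\to\infty$ the term $-\frac{\mu_2}{p_2}t^{N(p_2-2)/2}|v|_{p_2}^{p_2}$ dominates the positive gradient term $\frac12 t^2|\nabla v|_2^2$ while the coupling term stays nonpositive, so $J(\underline u,v_t)\to-\infty$ and $|\nabla v_t|_2^2\to\infty$; choosing $t$ large enough that $J(\underline u,v_t)<\inf_B J$ and $|\nabla v_t|_2^2>2c(\underline u)$, I set $\overline w:=v_t\in S(a_2)\setminus A_{2c(\underline u)}$. Both endpoint values are then strictly below $\inf_B J$, which is the assertion.

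I expect the only genuine obstacle to be the strict separation $\inf_B J>m_{p_1}^{\mu_1}(a_1)$. The naive bound $\inf_B J\ge m_{p_1}^{\mu_1}(a_1)+\frac12\inf_{u_1}c(u_1)$ is useless, since $c(u_1)\to0$ is possible as $|u_1|_{r_1q}\to\infty$; the two-regime split resolves this precisely because in the large-gradient regime the degeneration of $c$ is compensated by the coercivity of the scalar functional $I_1$. Everything else is routine. Note in particular that the gap $\eta$ is indispensable for the first endpoint: for small gradient one has $J(\underline u,v_t)>J(\underline u,0)$, so $\overline v$ cannot be produced merely by pushing $J(\underline u,\overline v)$ below $J(\underline u,0)$.
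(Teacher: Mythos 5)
Your proposal is correct and follows essentially the same route as the paper: the strict gap $\inf_B J > J(\underline{u},0)$ is obtained there too via \eqref{eq:JonB-est} combined with exactly your two-regime split (coercivity of $J(\cdot,0)$ on $S(a_1)$ for $|\nabla u_1|_2 > R$, and $\inf_{|\nabla u_1|_2\le R} c(u_1)>0$ from Lemma~\ref{lem:def-c} on the bounded set), while $\overline{w}$ is produced by the same mass-preserving dilation (written there as $(t*u)(x)=e^{tN/2}u(e^t x)$) using $p_2>2+\frac4N$. Your explicit choice of $\overline{v}=v_t$ for small $t$ is just a concrete version of the paper's remark that $J(\underline{u},u_2)\to J(\underline{u},0)$ as $|\nabla u_2|_2\to 0$.
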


\begin{proof}
Since $J(\underline{u},u_2) \to J(\underline{u},0)$ as $|\nabla u_2|_2 \to 0$, in order to obtain
$\overline{v} \in A(\underline{u})$ it is sufficient to prove $J(\underline{u},0)<\inf_BJ$. The functional
$J(\cdot,0): S(a_1)\to\R$ is coercive because $2<p_1<2+\frac4N$. Choose $R>0$ such that $J(u_1,0)\ge J(\underline{u},0)+1$ if $|\nabla u_1|_2\ge R$. Then we have for $(u_1,u_2)\in B$ with
$|\nabla u_1|_2\ge R$, cf.~\eqref{eq:JonB-est}:
\[
J(u_1,u_2) \ge J(u_1,0)+\frac34c(u_1) > J(\underline{u},0)+1\,.
\]
By Lemma~\ref{lem:def-c} there holds
\[
\eps:=\inf_{|\nabla u_1|_2\le R} c(u_1) > 0
\]
which implies for $(u_1,u_2)\in B$ with $|\nabla u_1|_2\le R$:
\[
J(u_1,u_2) \ge J(u_1,0)+\frac34c(u_1) \ge J(\underline{u},0)+\frac34\eps\,.
\]

In order to find $\overline{w} \in S(a_2)\setminus A_{2c(\underline{u})}$ as required we define for each $u\in S(a_2)$ and $t\in\R$ the scaled function $t*u$ by $(t*u)(x) = e^{t \frac{N}{2}}u(e^tx)$.
Clearly $t*u\in S(a_2)$ for every $t>0$, and
$|\nabla(t*u)|_2\to\infty$ as $t\to\infty$. Now since $p_2 > 2 + \frac4N$,
fixing an arbitrary $u \in S(a_2)$ we see that $J(\underline{u}, (t*u)) \to - \infty$
as $t \to \infty$.
\end{proof}

As a consequence of Lemma~\ref{lem:end-point} the set
$$
\begin{aligned}
\Ga
 &:= \big\{g\in\cC([0,1],\cS):g(0)=(v_1,v_2),\, g(1) = (w_1, w_2),\\
 &\hspace{2cm}
       v_2\in A(v_1),\,w_2\notin A_{2c(w_1)},\,\max\{J(v_1,v_2),J(w_1,w_2)\}<\inf_BJ\big\}
\end{aligned}
$$
is nonempty.

\begin{lem}\label{intersection}
We have
$$
\ga(a_1, a_2) := \inf_{g\in\Ga} \max_{t\in[0,1]} J(g(t)) \geq \inf_BJ.
$$
\end{lem}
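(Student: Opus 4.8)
The plan is to establish that $\Gamma$ and $B$ link, in the sense that the image of \emph{every} admissible path $g\in\Gamma$ must meet the set $B$. Once this is shown the conclusion is immediate: for any $g\in\Gamma$ there is some $t_0\in[0,1]$ with $g(t_0)\in B$, whence $\max_{t\in[0,1]}J(g(t))\ge J(g(t_0))\ge\inf_B J$, and taking the infimum over $g\in\Gamma$ gives $\ga(a_1,a_2)\ge\inf_B J$. So the entire content of the lemma reduces to the crossing property, which I would prove by an intermediate value argument.

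To set this up, write $g(t)=(g_1(t),g_2(t))$ with $g_1(t)\in S(a_1)$ and $g_2(t)\in S(a_2)$ for all $t$, and introduce the real-valued function
\[
\phi(t):=|\nabla g_2(t)|_2^2-2\,c(g_1(t)).
\]
The two endpoint conditions built into the definition of $\Gamma$ are designed precisely so that $\phi$ changes sign. At $t=0$ we have $g_2(0)\in A(g_1(0))=A_{c(g_1(0))}$, i.e.\ $|\nabla g_2(0)|_2^2\le c(g_1(0))$; since $c$ takes strictly positive values (Lemma~\ref{lem:def-c}), this yields $\phi(0)\le c(g_1(0))-2c(g_1(0))=-c(g_1(0))<0$. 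At $t=1$ we have $g_2(1)\notin A_{2c(g_1(1))}$, which by definition of $A_{2c(\cdot)}$ means $|\nabla g_2(1)|_2^2>2c(g_1(1))$, i.e.\ $\phi(1)>0$. Thus $\phi(0)<0<\phi(1)$.

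It then remains to observe that $\phi$ is continuous. Indeed, since $g\in\cC([0,1],\cS)$, both $t\mapsto g_1(t)\in S(a_1)$ and $t\mapsto g_2(t)\in H$ are continuous; the map $u\mapsto|\nabla u|_2^2$ is continuous on $H$, and $c$ is continuous on $S(a_1)$ again by Lemma~\ref{lem:def-c}. Hence $\phi$ is continuous, and the intermediate value theorem produces some $t_0\in(0,1)$ with $\phi(t_0)=0$, i.e.\ $|\nabla g_2(t_0)|_2^2=2\,c(g_1(t_0))$. This says exactly that $g_2(t_0)\in B_{c(g_1(t_0))}=B(g_1(t_0))$, so $g(t_0)\in B$, which is the crossing property. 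I do not expect any serious obstacle here: the argument is a one-line application of the intermediate value theorem, and the only point requiring mild care is that the sign condition at $t=0$ be \emph{strict}, which is guaranteed by the strict positivity of $c$ (Lemma~\ref{lem:def-c}). The conceptual work has already been done in Lemmas~\ref{lem:def-c} and~\ref{lem:end-point}, which arrange the barrier $B$ and the endpoints so that this sign change occurs.
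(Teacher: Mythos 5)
Your proposal is correct and follows exactly the paper's own argument: the paper also defines the map $t \mapsto |\nabla g_2(t)|_2^2 - 2c(g_1(t))$, checks it is negative at $t=0$ (using $c>0$) and positive at $t=1$ from the endpoint conditions in $\Ga$, and applies the intermediate value theorem to find a crossing point in $B$. Your additional remarks on the continuity of $c$ (from Lemma~\ref{lem:def-c}) and the strictness of the sign at $t=0$ are just slightly more explicit versions of what the paper leaves implicit.
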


\begin{proof}
We just need to show that for each $g(t) = (g_1(t),g_2(t)) \in \Ga$ there exists a
$t \in [0,1]$ such that $g(t) \in B$. The map $\al : [0,1] \to \R$ given by
$t \to |\nabla g_2(t)|_2^2 - 2 c(g_1(t))$ satisfies
$$
\al (0) = || \nabla v_2||_2^2 - 2 c(v_1) \leq c(v_1) - 2 c(v_1) < 0
$$
and
$$
\al (1) = ||\nabla w_2||_2^2 - 2 c(w_1) >0.
$$
Thus there exists a $t \in [0,1]$ such that $\al (t) = 0$, which means $g(t) \in B$.
\end{proof}

For future reference we also need.

\begin{lem}\label{upperbound}
Assume that (H1) and (H2) hold. Then for any $a_1>0$ and $a_2>0$ we have
\begin{equation}\label{upperestimate}
\gamma(a_1,a_2) \leq m_{p_1}^{\mu_1}(a_1) + m_{p_2}^{\mu_2}(a_2).
\end{equation}
\end{lem}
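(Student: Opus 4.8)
The plan is to construct an explicit path $g\in\Ga$ whose maximum value of $J$ is as close as desired to $m_{p_1}^{\mu_1}(a_1)+m_{p_2}^{\mu_2}(a_2)$, so that taking the infimum over $\Ga$ yields the bound \eqref{upperestimate}. The natural candidate is built from the ground state $u_{a_2}\in S(a_2)$ of the scalar functional $I$ at mass $a_2$ provided by Lemma~\ref{lem-ground}, together with the minimizer $\underline u\in S(a_1)$ from \eqref{star5}. Using the scaling $t*u$ defined in the proof of Lemma~\ref{lem:end-point}, which keeps the $L^2$-norm fixed, I would consider the path $s\mapsto(\underline u,(t(s))*u_{a_2})$ where $t(s)$ ranges over an interval $[t_-,t_+]$ chosen so that the endpoints satisfy the membership conditions defining $\Ga$: at $s=0$ the second component lies in $A(\underline u)$ (small gradient, achieved by taking $t_-$ very negative), and at $s=1$ it lies outside $A_{2c(\underline u)}$ (large gradient, $t_+$ large), with $J$ below $\inf_B J$ at both ends, exactly as established in Lemma~\ref{lem:end-point}.

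First I would record the explicit form of $J$ along this path. Since the first component is fixed at $\underline u$, we have
\[
J(\underline u,(t*u_{a_2}))
 = J(\underline u,0) + \tfrac12 e^{2t}|\nabla u_{a_2}|_2^2
   - \tfrac{\mu_2}{p_2}e^{\frac{N(p_2-2)}2 t}|u_{a_2}|_{p_2}^{p_2}
   - \be\, e^{\frac{N(r_2-2)}2 t}\!\int_{\R^N}|\underline u|^{r_1}|t*u_{a_2}|^{r_2}\,dx,
\]
after accounting for how each term scales under $u\mapsto t*u$. I would observe that the interaction term is nonnegative and the coefficient $\be>0$, so dropping it gives the upper bound
\[
J(\underline u,(t*u_{a_2})) \le J(\underline u,0) + \Phi(t),
\]
where $\Phi(t):=\tfrac12 e^{2t}|\nabla u_{a_2}|_2^2-\tfrac{\mu_2}{p_2}e^{\frac{N(p_2-2)}2 t}|u_{a_2}|_{p_2}^{p_2}$ is precisely the scalar energy $I((t*u_{a_2}))$. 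The key point is that because $u_{a_2}$ is the ground state on the natural constraint $V(a_2)$ of \eqref{eq:def-V}, it satisfies $|\nabla u_{a_2}|_2^2=\frac{N(p_2-2)}2|u_{a_2}|_{p_2}^{p_2}$, which means $\Phi'(0)=0$ and $\Phi$ attains its maximum over $t\in\R$ exactly at $t=0$ with $\Phi(0)=I(u_{a_2})=m_{p_2}^{\mu_2}(a_2)$. Hence $\max_t\Phi(t)=m_{p_2}^{\mu_2}(a_2)$, and since $J(\underline u,0)=m_{p_1}^{\mu_1}(a_1)$ by \eqref{star5} and Lemma~\ref{lem-ground}, I obtain $\max_{s}J(g(s))\le m_{p_1}^{\mu_1}(a_1)+m_{p_2}^{\mu_2}(a_2)$.

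The remaining work is to verify that this $g$ genuinely belongs to $\Ga$, so that its maximum bounds $\ga(a_1,a_2)$ from above. I would choose $t_-$ negative enough that $|\nabla(t_-*u_{a_2})|_2^2\le c(\underline u)$, placing the left endpoint in $A(\underline u)$, and $t_+$ large enough that $|\nabla(t_+*u_{a_2})|_2^2>2c(\underline u)$, placing the right endpoint outside $A_{2c(\underline u)}$; the endpoint energy conditions $\max\{J(\text{endpoints})\}<\inf_B J$ then follow from exactly the estimates in Lemma~\ref{lem:end-point}, since $J(\underline u,(t*u_{a_2}))\to J(\underline u,0)$ as $t\to-\infty$ and $\to-\infty$ as $t\to+\infty$. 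I expect the main subtlety to be confirming that the supremum of $J$ along the path, which includes the dropped nonnegative interaction term, is still bounded above by the sum of scalar levels: this is resolved precisely by discarding that term, which is legitimate since $\be>0$ and the interaction integral is nonnegative, so the maximum of the true $J$ along $g$ never exceeds the maximum of the decoupled energy, namely $m_{p_1}^{\mu_1}(a_1)+m_{p_2}^{\mu_2}(a_2)$. Taking the infimum over $\Ga$ completes the proof.
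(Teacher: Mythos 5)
Your proposal is correct and matches the paper's own proof essentially step for step: the same path $t\mapsto(\underline u,\,t*\overline u)$ with the scalar ground state $\overline u\in V(a_2)$, the same use of the natural constraint to get $\max_{t\in\R}I(t*\overline u)=I(\overline u)=m_{p_2}^{\mu_2}(a_2)$ (the paper's \eqref{star6}), the same dropping of the nonnegative coupling term via $\be>0$, and the same endpoint verification through Lemma~\ref{lem:end-point}. The only blemish is a typo in your displayed formula for $J$ along the path, where the interaction term carries both the prefactor $e^{\frac{N(r_2-2)}{2}t}$ and the already-scaled $|t*u_{a_2}|^{r_2}$ inside the integral (a double counting), which is harmless since you immediately discard that term by sign.
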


\begin{proof}
Let $\overline{u} \in S(a_2)$ be such that
$$J(0, \overline{u}) = I(\overline{u}) = \min_{u \in V(a_2)}I(u) = m_{p_2}^{\mu_2}(a_2)$$
whose existence and characterization is recalled in Lemma~\ref{lem-ground}, with $V(a)$ defined in \eqref{eq:def-V}. Since $\overline{u} \in V(a_2)$ it is readily seen that
\begin{equation}\label{star6}
\max_{t \in \R} I(t*\overline{u}) = I(0 *\overline{u}) = I(\overline{u}).
\end{equation}
We now consider the path  $h:[0,1] \to S$ given by $h(t) = (\underline{u}, h_s(t))$ where
$$
h_s(t)(x) =  e^{s(2t-1) \frac{N}{2}}\overline{u}\big(e^{s(2t-1)}x\big).
$$
Here $s>0$ is choosen sufficiently large so that
$$h_s(0)(\cdot)= e^{-s \frac{N}{2}}\overline{u}(e^{-s}\cdot) \in A(\underline{u}),\, \,   h_s(1)(\cdot)= e^{s \frac{N}{2}}\overline{u}(e^s \cdot) \not \in A_{2c(\underline{u})} \, \mbox{ and } \, \max J (\underline{u}, h_s(1)) < 0.$$
Thus $h$ belongs to $\Ga$. Now using (\ref{star6}) and $\beta \geq 0$ we obtain
$$
\max_{t\in[0,1]}J(h(t))
 \le J(\underline{u},0)+\max_{t\in[0,1]}J(0,h_s(t)) = m_{p_1}^{\mu_1}(a_1)+m_{p_2}^{\mu_2}(a_2).
$$
\end{proof}


\begin{lem}\label{lem:PS-exist}
Assume that (H1) and (H2) hold. There exists a Palais-Smale sequence $\{(u_1^n,u_2^n)\}\subset\cS$ for $J$ at the level $\ga(a_1,a_2)$ which satisfies $(u_1^n)^- \to 0$, $(u_2^n)^- \to 0$ in $H$ and the additional property that $Q(u_1^n, u_2^n) \to 0$ where $Q$ is given in \eqref{eq:def-Q}.
\end{lem}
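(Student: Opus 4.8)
The plan is to use the by-now standard device (\cite{Je,BaVa,BeJeLu,Lu}) of adjoining the scaling parameter $s$ as an auxiliary variable, so that the Pohozaev quantity $Q$ becomes a partial derivative of $J$ and the Palais-Smale information automatically forces $Q\to0$. First I would record the effect of the scaling $(s*u)(x)=e^{s\frac N2}u(e^sx)$, already used in Lemmas~\ref{lem:end-point} and \ref{upperbound}, which preserves the $L^2$-norm and hence maps $\cS$ into itself. Writing $w=(u_1,u_2)$ and $s*w=(s*u_1,s*u_2)$, a direct computation gives
\[
J(s*w)=\frac{e^{2s}}2\big(|\nabla u_1|_2^2+|\nabla u_2|_2^2\big)
 -\frac{\mu_1}{p_1}e^{\frac{N(p_1-2)}2 s}|u_1|_{p_1}^{p_1}
 -\frac{\mu_2}{p_2}e^{\frac{N(p_2-2)}2 s}|u_2|_{p_2}^{p_2}
 -\be\, e^{\frac{N(r_1+r_2-2)}2 s}\int_{\R^N}|u_1|^{r_1}|u_2|^{r_2},
\]
and differentiating at $s=0$ yields exactly $Q(w)=\frac{d}{ds}\big|_{s=0}J(s*w)$.

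Next I would introduce the auxiliary functional $\widetilde J:\R\times\cS\to\R$, $\widetilde J(s,w)=J(s*w)$, on the complete metric space $\R\times\cS$ endowed with the product metric. To it I attach the minimax class $\widetilde\Ga$ of paths $(\sigma,\eta):[0,1]\to\R\times\cS$ with $\sigma(0)=\sigma(1)=0$ and with $\eta(0),\eta(1)$ subject to the same endpoint constraints that define $\Ga$. The maps $g\mapsto(0,g)$ and $(\sigma,\eta)\mapsto\big(t\mapsto\sigma(t)*\eta(t)\big)$ set up a bijection between $\Ga$ and $\widetilde\Ga$ which preserves the maximum of $J$ along a path (since $\widetilde J(\sigma(t),\eta(t))=J(\sigma(t)*\eta(t))$ and the endpoints carry $\sigma=0$, so $\sigma*\eta=\eta$ there); consequently the associated minimax level coincides with $\ga(a_1,a_2)$. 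Applying Ekeland's variational principle to $\widetilde J$ on $\widetilde\Ga$, as in \cite{Je,BaVa}, produces a sequence $(s_n,w_n)\in\R\times\cS$ with $\widetilde J(s_n,w_n)\to\ga(a_1,a_2)$, $\partial_s\widetilde J(s_n,w_n)\to0$ and $\big\|\partial_w\widetilde J(s_n,w_n)\big\|_{(T_{w_n}\cS)^*}\to0$, lying at distance $o(1)$ from a near-optimal path; since the endpoints carry $\sigma=0$, one arranges $s_n\to0$. Setting $(u_1^n,u_2^n):=s_n*w_n\in\cS$, the $s$-component gives $Q(u_1^n,u_2^n)=\partial_s\widetilde J(s_n,w_n)\to0$, while, for each value of the bounded sequence $s_n$, the map $v\mapsto s_n*v$ is a diffeomorphism of $\cS$ whose differential identifies $\partial_w\widetilde J(s_n,w_n)$ with $(J|_{\cS})'(u_1^n,u_2^n)$ up to an isomorphism of controlled norm; hence $(J|_{\cS})'(u_1^n,u_2^n)\to0$ and $\{(u_1^n,u_2^n)\}$ is a Palais-Smale sequence for $J$ on $\cS$ at the level $\ga(a_1,a_2)$.

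Finally, to secure $(u_1^n)^-\to0$ and $(u_2^n)^-\to0$ in $H$, I would run the whole construction inside the (almost) nonnegative cone. Replacing a path component $u_i$ by $|u_i|$ does not increase $J$, because $|\nabla|u_i||_2\le|\nabla u_i|_2$ while the nonlinear terms depend only on $|u_i|$; thus restricting $\Ga$ to paths with nonnegative endpoints leaves $\ga(a_1,a_2)$ unchanged, and the Ekeland sequence may be taken within $o(1)$ of nonnegative functions, forcing the negative parts to vanish in $H$. The main obstacle is the bookkeeping of this auxiliary minimax principle: proving that the augmented level equals $\ga(a_1,a_2)$, that the extracted sequence has $s_n\to0$, and that the two factors of $\partial\widetilde J\to0$ translate faithfully into ``$Q\to0$'' together with ``$(J|_{\cS})'\to0$'', all while keeping the sequence near the nonnegative cone; the scaling identities above are otherwise elementary.
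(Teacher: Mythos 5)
Your proposal is correct and takes essentially the same route as the paper's proof: the stretched functional $\widetilde{J}(s,(u_1,u_2))=J(s*u_1,s*u_2)$ on $\R\times\cS$ with the augmented minimax class, the identification $\widetilde{\gamma}(a_1,a_2)=\ga(a_1,a_2)$ via the maps $g\mapsto(0,g)$ and $(\sigma,g)\mapsto\sigma*g$, Ekeland's principle, and the scaling covariance that turns $\pa_s\widetilde{J}\to0$ into $Q(u_1^n,u_2^n)\to0$ and transfers $\pa_u\widetilde{J}\to0$ into $(J|_{\cS})'(u_1^n,u_2^n)\to0$ (the paper phrases this by passing to the sequence $\{(0,s_n*(u_1^n,u_2^n))\}$ rather than by your norm estimate on the scaling diffeomorphism, but the content is identical). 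Your handling of the negative parts, using $J(u_1,u_2)=J(|u_1|,|u_2|)$ to work with almost-optimal nonnegative paths and the proximity of the Ekeland sequence to them, is also exactly the paper's argument.
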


\begin{remark}\label{natural}
It is possible to prove that any solution $(u_1,u_2)$ of \eqref{eq:1.1}, \eqref{eq:1.2} must satisfy $Q(u_1,u_2)=0$. Thus $Q(u_1,u_2)=0$ is a natural constraint. This condition is directly related to the Pohozaev identity adapted to the presence of the constraint $S$. Formally it can be obtained by looking at the function $t \mapsto (t*u_1,t*u_2)$ for $(u_1,u_2) \in \cS.$ Then $Q(u_1,u_2)=0$ corresponds to the condition that the derivative of $t \mapsto J(t*u_1,t*u_2)$ is zero when $t=1$.
\end{remark}

Results in the spirit of Lemma \ref{lem:PS-exist} have now been proved in a variety of situations \cite{BaVa,BeJe,Je,JeLuWa,Lu,HiIkTa} and we shall be rather sketchy here, refering the readers to these papers for more details. We recall the stretched functional first introduced in \cite{Je}:
$$\widetilde{J}: \R \times E \to \R,\quad  (s,(u_1,u_2)) \mapsto J(s*u_1,s*u_2).$$
In the sequel we write
$s*(u_1,u_2):=(s*u_1,s*u_2)$ and recall that $s*(u_1,u_2) \in \cS$ if $(u_1,u_2) \in \cS$. Now we define the set of paths
$$
\begin{aligned}
\widetilde{\Ga}
 &:= \big\{\widetilde{g} \in C([0,1], \R \times \cS):\ \widetilde{g}(0)= (0,(v_1, v_2)), \,
        \widetilde{g}(1)= (0,(w_1, w_2))\\
 &\hspace{2cm}
       v_2\in A(v_1),\,w_2\notin A_{2c(w_1)},\,\max\{J(v_1,v_2),J(w_1,w_2)\}<\inf_BJ\big\}
\end{aligned}
$$
and
$$
\widetilde{\gamma}(a_1,a_2)
:= \inf_{\widetilde{g}\in\widetilde{\Gamma}}\max\limits_{t\in[0,1]}\widetilde{J}(\widetilde{g}(t)).
$$
Observe that $\widetilde{\gamma}(a_1,a_2) = \gamma(a_1,a_2)$. Indeed, by the definitions of $\widetilde{\gamma}(a_1,a_2)$ and $\gamma(a_1,a_2)$ this identity follows immediately from the fact that the maps
$$
\varphi: \Gamma \to \widetilde{\Gamma},\ g \mapsto \varphi(g):=(0,g),
$$
and
$$
\psi:\widetilde{\Gamma}\to\Gamma,\ \widetilde{g}=(\sigma,g)\mapsto\psi(\widetilde{g}):=\sigma*g,\
\text{ with }(\sigma*g)(t)=\sigma(t)*g(t),
$$
satisfy
$$
\widetilde{J}(\varphi(g))=J(g)\ \mbox{ and }\ J(\psi(\widetilde{g}))=\widetilde{J}(\widetilde{g}).
$$

\begin{proof}[Proof of Lemma \ref{lem:PS-exist}]
From the observation that $\widetilde{\gamma}(a_1,a_2) = \gamma(a_1,a_2)$ we obtain a sequence $\{(u_1^n, u_2^n)\} \subset \cS$ such that
$$\max_{t \in [0,1]}\widetilde{J}(0, (v_1^n,v_2^n)) \to \widetilde{\gamma}(a_1,a_2).$$
Since $J(u_1,u_2)=J(|u_1|,|u_2|)$ we can assume that $v_1^n(t) \geq 0$ and $v_2^n(t) \geq 0$ for $t \in [0,1]$. \medskip

Now Ekeland's variational principle implies the existence of a Palais-Smale sequence $\{(s_n, (u_1^n,u_2^n))\}$ for $\widetilde{J}$ restricted to $\R \times \cS$ at the level $\gamma(a_1,a_2)$ such that $s_n \to 0$ and $u_i^n-v_i^n \to 0$ for $i=1,2$. It follows that $(u_1^n)^- \to 0$ and $(u_2^n)^- \to 0$. From $\widetilde{J}(s,(u_1,u_2)) = \widetilde{J}(0, s*(u_1,u_2))$ we deduce that
$$ (\pa_s \widetilde{J})(s,(u_1,u_2)) = (\pa_s \widetilde{J})(0, s* (u_1,u_2))$$
and, for $u=(u_1,u_2)$, $\phi=(\phi_1,\phi_2)$:
$$
(\pa_u \widetilde{J})(s,u)[\phi] = (\pa_u \widetilde{J})(0, s*u)[s*\phi].
$$
As a consequence, $\{(0,s_n *(u_1^n, u_2^n))\}$ is also a Palais-Smale sequence for $\widetilde{J}$ restricted to $\R \times \cS$ at the level $\gamma(a_1,a_2)$. Thus we may assume that $s_n =0$. This implies, firstly, that $\{(u_1^n,u_2^n)\} \subset S$ is a Palais-Smale sequence for $J$ restricted to $S$ at the level $\gamma(a_1,a_2)$ and secondly using $\pa_s \widetilde{J}(0,(u_1^n, u_2^n)) \to 0$ that $Q(u_1^n, u_2^n) \to 0$ holds.
\end{proof}

\begin{lem}\label{lem:PS-bound}
Assume (H1) and (H2) hold. Then the sequence $\{(u_1^n, u_2^n)\} \subset \cS$  obtained in Lemma~\ref{lem:PS-exist} is bounded.
\end{lem}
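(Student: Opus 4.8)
The plan is to use the additional property $Q(u_1^n,u_2^n)\to0$ together with the boundedness of the minimax level to control the gradients. Since $\{(u_1^n,u_2^n)\}\subset\cS$, the $L^2$-norms are fixed: $|u_1^n|_2^2=a_1$ and $|u_2^n|_2^2=a_2$ for all $n$, so boundedness in $E=H\times H$ is equivalent to boundedness of $|\nabla u_1^n|_2$ and $|\nabla u_2^n|_2$. The idea is to produce a suitable linear combination of the energy identity $J(u_1^n,u_2^n)\to\gamma(a_1,a_2)$ and the Pohozaev-type identity $Q(u_1^n,u_2^n)\to0$ in which every nonlinear term carries a \emph{favorable} sign or can be absorbed, leaving the kinetic energy on one side bounded by a constant.

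First I would write out $J(u_1^n,u_2^n)$ and $Q(u_1^n,u_2^n)$ explicitly and eliminate the mixed coupling term. Recall
\[
J=\frac12\big(|\nabla u_1^n|_2^2+|\nabla u_2^n|_2^2\big)
 -\frac{\mu_1}{p_1}|u_1^n|_{p_1}^{p_1}-\frac{\mu_2}{p_2}|u_2^n|_{p_2}^{p_2}
 -\be\!\int|u_1^n|^{r_1}|u_2^n|^{r_2},
\]
while $Q$ has the same three nonlinear integrals multiplied by $N(\frac{p_1}{2}-1)$, $N(\frac{p_2}{2}-1)$ and $N(\frac{r_1+r_2}{2}-1)$ respectively (and $|\nabla u_i^n|_2^2$ with coefficient $1$). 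Under (H2) the interaction exponent satisfies $r_1+r_2>2+\frac4N$, so $N(\frac{r_1+r_2}{2}-1)>2$; hence the combination $Q-2J$ gives the coupling term the coefficient $-N\be(\frac{r_1+r_2}{2}-1)+2\be<0$, meaning $-\be\int|u_1^n|^{r_1}|u_2^n|^{r_2}$ appears with a \emph{negative} coefficient, so it may be dropped from an upper bound. Forming $Q-2J$ therefore yields, up to the vanishing $o(1)$ and the bounded $-2\gamma(a_1,a_2)+o(1)$,
\[
\Big(N\big(\tfrac{p_1}{2}-1\big)\tfrac{2}{p_1}-\tfrac{2}{p_1}\Big)\mu_1|u_1^n|_{p_1}^{p_1}
+\Big(N\big(\tfrac{p_2}{2}-1\big)\tfrac{2}{p_2}-\tfrac{2}{p_2}\Big)\mu_2|u_2^n|_{p_2}^{p_2}
\le C,
\]
where the $|\nabla u_i^n|_2^2$ terms cancel. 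The coefficient of $|u_2^n|_{p_2}^{p_2}$ is positive since $p_2>2+\frac4N$ forces $N(\frac{p_2}{2}-1)>2$; this controls $|u_2^n|_{p_2}^{p_2}$. The $p_1$-term has the \emph{wrong} sign because $p_1<2+\frac4N$, but by the Gagliardo-Nirenberg estimate \eqref{eq:est0} we have $|u_1^n|_{p_1}^{p_1}\le C(N,p_1,a_1)|\nabla u_1^n|_2^{N(p_1-2)/2}$ with exponent $N(p_1-2)/2<2$, so this term is of lower order in $|\nabla u_1^n|_2$ and will be absorbable.

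I would then close the argument by returning to the energy identity $J(u_1^n,u_2^n)=\gamma(a_1,a_2)+o(1)$ written as
\[
\tfrac12\big(|\nabla u_1^n|_2^2+|\nabla u_2^n|_2^2\big)
 =\gamma(a_1,a_2)+\tfrac{\mu_1}{p_1}|u_1^n|_{p_1}^{p_1}
 +\tfrac{\mu_2}{p_2}|u_2^n|_{p_2}^{p_2}
 +\be\!\int|u_1^n|^{r_1}|u_2^n|^{r_2}+o(1),
\]
and bounding each right-hand nonlinear term by Gagliardo-Nirenberg, using \eqref{eq:est0} and \eqref{eq:mixed-est1}. The $p_2$-integral is already bounded by the previous step; the $p_1$-integral and the coupling integral are both $o\big(|\nabla u_1^n|_2^2+|\nabla u_2^n|_2^2\big)$ because their Gagliardo-Nirenberg exponents in the gradients are strictly less than $2$ (for $|u_1^n|_{p_1}^{p_1}$ since $p_1<2+\frac4N$, and for the mixed term I would show the total gradient exponent is subquadratic by an appropriate choice of the Hölder parameter $q$, exactly as in the coercivity estimate \eqref{eq:mixed-est2} but now needing a \emph{mixed} bound involving both $|\nabla u_1^n|_2$ and $|\nabla u_2^n|_2$ with individual exponents summing to less than $2$). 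Writing $t_n:=\big(|\nabla u_1^n|_2^2+|\nabla u_2^n|_2^2\big)^{1/2}$, the energy identity becomes $\frac12 t_n^2\le C+o(t_n^2)+C t_n^{\sigma}$ with $\sigma<2$, which forces $t_n$ to stay bounded. I expect the main obstacle to be the mixed coupling term: unlike the pure-power terms it mixes both gradients, so the subquadratic estimate requires care in choosing the Hölder exponent $q$ so that the two gradient powers $\frac{N(r_1q-2)}{2q}$ and $\frac{N(r_2q'-2)}{2q'}$ sum to strictly less than $2$; whether this is possible when $r_1+r_2>2+\frac4N$ is exactly where hypothesis (H2), in particular $r_2>2$, must be used, and one should double-check that the $Q-2J$ combination genuinely removes this term from the binding estimate rather than merely relocating the difficulty.
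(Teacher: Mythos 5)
Your first step (forming the combination $2J-Q$, whose gradient terms cancel) is sound and is in fact a close cousin of the paper's argument, but your second, binding step fails --- and it fails exactly at the point you flagged for double-checking. For \emph{any} H\"older exponent $q$ admissible in \eqref{eq:def-q}, the two gradient exponents in \eqref{eq:mixed-est1} sum to
\[
\frac{N(r_1q-2)}{2q}+\frac{N(r_2q'-2)}{2q'}
 =\frac N2\Big(r_1+r_2-\frac2q-\frac2{q'}\Big)
 =\frac{N(r_1+r_2-2)}{2},
\]
independently of $q$, and under (H2) this is strictly greater than $2$. The H\"older parameter only redistributes the exponent between $|\nabla u_1^n|_2$ and $|\nabla u_2^n|_2$; it cannot lower the total. (The subquadratic estimate \eqref{eq:mixed-est2} used for Theorem~\ref{th:min} is available precisely because there $r_1+r_2<2+\frac4N$, the opposite regime.) Consequently your final energy identity contains a term of order $t_n^{\sigma}$ with $\sigma=\frac{N(r_1+r_2-2)}{2}>2$, and the absorption $\frac12t_n^2\le C+o(t_n^2)+Ct_n^{\sigma}$ proves nothing: the difficulty was indeed merely relocated, not removed. (There is also a small slip in your displayed coefficients after forming $Q-2J$: they should read $\frac{\mu_i}{p_i}\big(N(\frac{p_i}{2}-1)-2\big)$, the inequality being the statement $2J-Q\le C$; but this is cosmetic.)

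The missing idea --- and the substance of the paper's proof --- is that the coupling term must never be estimated at all: it has to be kept with its favorable sign. The paper writes, for $\eps>0$ small,
\[
J(u_1,u_2)=\frac\eps2|\nabla u_1|_2^2+\frac\eps2|\nabla u_2|_2^2
 +a(\eps)|u_1|_{p_1}^{p_1}+b(\eps)|u_2|_{p_2}^{p_2}
 +\beta c(\eps)\int_{\R^N}|u_1|^{r_1}|u_2|^{r_2}\,dx
 +\frac{1-\eps}{2}Q(u_1,u_2),
\]
where $b(\eps)>0$ and $c(\eps)>0$ for small $\eps$ precisely because $p_2,\,r_1+r_2>2+\frac4N$, and only $a(\eps)<0$; that term is subquadratic by \eqref{eq:est0} since $p_1<2+\frac4N$. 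Since $J\to\gamma(a_1,a_2)$ and $Q\to0$, dropping the two nonnegative nonlinear terms bounds $\frac{\eps}{2}\,t_n^2$ by $C+C\,t_n^{N(p_1-2)/2}$, hence $t_n$ is bounded. Your own first step almost contains this fix: in $2J-Q$ the coupling integral carries the \emph{positive} coefficient $\beta\big(N(\frac{r_1+r_2}{2}-1)-2\big)$, so rather than discarding it you should retain it; then $2J-Q\le C$ yields a subquadratic bound on \emph{both} $|u_2^n|_{p_2}^{p_2}$ and $\beta\int_{\R^N}|u_1^n|^{r_1}|u_2^n|^{r_2}\,dx$, namely $\le C+C|\nabla u_1^n|_2^{N(p_1-2)/2}$, and with these two bounds your concluding energy-identity step does close. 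As written, however, the proposal discards exactly the information it later needs, and the Gagliardo--Nirenberg step it substitutes is false under (H2).
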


\begin{proof}
This property is directly related to the fact that the functional $J$ restricted to the set  $Q(u_1,u_2)=0$ is coercive. Indeed we can write, for any $\eps>0$,
\[
\begin{aligned}
J(u_1,u_2)
 &= \frac{\eps}{2} |\nabla u_1|_2^2 + \frac{\eps}{2} |\nabla u_2|_2^2
     +  a(\eps) |u_1|_{p_1}^{p_1} + b(\eps) |u_2|_{p_2}^{p_2} \\
 &\hspace{1cm}
     + \be c(\eps) \int_{\R^N}|u_1|^{r_1}|u_2|^{r_2} \, dx
     + \frac{1-\eps}{2}Q(u_1,u_2).
\end{aligned}
\]
where
$$
a(\eps) = \frac{(1 - \eps)\mu_1N}{2 p_1}\left(\frac{p_1}{2}-1\right) - \frac{\mu_1}{p_1}, \quad b(\eps) = \frac{(1 - \eps)\mu_2N}{2 p_2} \left(\frac{p_2}{2}-1\right) - \frac{\mu_2}{p_2}
$$
and
$$
c(\eps) = \frac{(1 - \eps)N}{2} \left(\frac{r_1 + r_2}{2} -1\right)-1.
$$
The coefficient $a(\eps)$ is strictly negative but the corresponding term can be easily controlled by $\eps|\nabla u_1|_2^2$ using the Gagliardo-Nirenberg inequality once more because $p_1<2+\frac4N$. Next observe that $b(\eps)>0$ holds for $\eps>0$ small enough, because $p_2 > 2+\frac4N$. Now concerning the term $\be c(\eps) \int_{\R^N}|u_1|^{r_1}|u_2|^{r_2} \, dx$ we immediately obtain that $c(\eps) > 0$ for $\eps>0$ small. Using $Q(u_1^n,u_2^n) \to 0$  yields the boundedness of our Palais-Smale sequence.
\end{proof}

At this point, using Lemma \ref{Palais-Smale} we can assume that $(u_1^n,u_2^n) \rightharpoonup (u_1,u_2)$ weakly in $E$ with $u_1 \geq 0$ and $u_2 \geq 0$. In order to get the strong convergence, according to Lemmas~\ref{Palais-Smale} and \ref{Step5}, we just need to show that $u_1\ne0$ and $u_2\ne0$.

\begin{lem}\label{lem:ultimate}
Assume that (H1) and (H2) hold, and that $\ga(a_1,a_2)\ne0$. Then $u_1\ne0$ and $u_2\ne0$.
\end{lem}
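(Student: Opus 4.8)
The plan is to argue by contradiction and rule out the two degenerate cases $u_1=0$ and $u_2=0$; once both are excluded, Lemma~\ref{Step5} yields $\la_1,\la_2<0$, and Lemma~\ref{Palais-Smale} then promotes the weak convergence to strong convergence in $H$, so that $(u_1,u_2)\in\cS$ and solves the problem. The organizing tool is an energy identity for the weak limit. Since $H\hookrightarrow L^q$ is compact for $q\in\,]2,2^*[$, the nonlinear terms converge, while along a subsequence $|\nabla u_i^n|_2^2\to\ell_i\ge|\nabla u_i|_2^2$. Passing to the limit in $J(u_1^n,u_2^n)\to\ga(a_1,a_2)$ and $Q(u_1^n,u_2^n)\to0$ and eliminating the common quantity $\ell_1+\ell_2$ yields
\[
\ga(a_1,a_2)
 = \Big(\frac{N(p_1-2)}{4}-1\Big)\frac{\mu_1}{p_1}|u_1|_{p_1}^{p_1}
 + \Big(\frac{N(p_2-2)}{4}-1\Big)\frac{\mu_2}{p_2}|u_2|_{p_2}^{p_2}
 + \Big(\frac{N(r_1+r_2-2)}{4}-1\Big)\be\int_{\R^N}|u_1|^{r_1}|u_2|^{r_2}\,dx.
\]
By (H1) the first coefficient is strictly negative and the second strictly positive, and by (H2) the third is strictly positive. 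In particular $u_1=u_2=0$ would force $\ga(a_1,a_2)=0$, which is excluded; moreover if $u_1=0$ then $\ga(a_1,a_2)=(\text{positive})\,|u_2|_{p_2}^{p_2}\ge0$, so $u_2\gneqq0$, while if $u_2=0$ then $\ga(a_1,a_2)=(\text{negative})\,|u_1|_{p_1}^{p_1}\le0$, so $u_1\gneqq0$.

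Assume first $u_1=0$, hence $u_2\gneqq0$. By Lemma~\ref{Step5}, $\la_2<0$, and Lemma~\ref{Palais-Smale} then gives $u_2^n\to u_2$ strongly in $H$, so $u_2\in S(a_2)$. By Lemma~\ref{Palais-Smale}~d) the pair $(0,u_2)$ solves \eqref{eq:1.1}; as $u_1=0$ kills the coupling, $u_2$ is a critical point of $I$ constrained to $S(a_2)$, whence $I(u_2)\ge m_{p_2}^{\mu_2}(a_2)$ by definition of the ground state level. Using the strong convergence of $u_2^n$, the vanishing of $|u_1^n|_{p_1}^{p_1}$ and of the mixed term, and $\liminf|\nabla u_1^n|_2^2\ge0$, we obtain $\ga(a_1,a_2)\ge I(u_2)\ge m_{p_2}^{\mu_2}(a_2)$. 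Combined with the upper bound $\ga(a_1,a_2)\le m_{p_1}^{\mu_1}(a_1)+m_{p_2}^{\mu_2}(a_2)$ of Lemma~\ref{upperbound}, this gives $m_{p_1}^{\mu_1}(a_1)\ge0$, contradicting $m_{p_1}^{\mu_1}(a_1)<0$ (Lemma~\ref{lem-ground}, since $p_1<2+\frac4N$). Hence $u_1\neq0$.

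Assume now $u_2=0$, hence $u_1\gneqq0$. By Lemma~\ref{Step5}, $\la_1<0$, and Lemma~\ref{Palais-Smale} gives $u_1^n\to u_1$ strongly in $H$, so $u_1\in S(a_1)$; by the strong maximum principle $u_1>0$, and the uniqueness in Lemma~\ref{lem-ground} identifies $u_1$ with the ground state, so $J(u_1,0)=I(u_1)=m_{p_1}^{\mu_1}(a_1)$. Being a constrained critical point, $u_1$ satisfies the scaling (Pohozaev) identity $Q(u_1,0)=0$, i.e.\ $|\nabla u_1|_2^2=\frac{\mu_1N(p_1-2)}{2p_1}|u_1|_{p_1}^{p_1}$. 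Passing to the limit in $Q(u_1^n,u_2^n)\to0$, where all terms except $|\nabla u_2^n|_2^2$ converge by the strong convergence of $u_1^n$ and by $u_2=0$, this identity cancels the $u_1$-contribution and leaves $|\nabla u_2^n|_2^2\to0$. Therefore $\ga(a_1,a_2)=\lim J(u_1^n,u_2^n)=I(u_1)=m_{p_1}^{\mu_1}(a_1)$ \emph{exactly}. This contradicts the strict lower bound $\ga(a_1,a_2)\ge\inf_BJ>m_{p_1}^{\mu_1}(a_1)$, which follows from Lemma~\ref{intersection} together with the estimates in the proof of Lemma~\ref{lem:end-point} (there $\inf_BJ\ge J(\underline u,0)+\min\{1,\frac34\eps\}$ with $\eps>0$ and $J(\underline u,0)=m_{p_1}^{\mu_1}(a_1)$). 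Hence $u_2\neq0$, which completes the proof.

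The crux is the asymmetry forced by (H1): $p_1$ lies in the $L^2$-subcritical regime, so $m_{p_1}^{\mu_1}(a_1)<0$, whereas $p_2$ and $r_1+r_2$ are $L^2$-supercritical, so $m_{p_2}^{\mu_2}(a_2)>0$. In the case $u_1=0$ a soft lower-semicontinuity estimate $\ga\ge m_{p_2}^{\mu_2}(a_2)$ suffices, because the upper bound then pushes $m_{p_1}^{\mu_1}(a_1)$ above $0$. In the case $u_2=0$, however, the upper bound is useless (it only yields $0\le m_{p_2}^{\mu_2}(a_2)$), so one must instead determine the value of $\ga$ \emph{exactly}; this is precisely where the extra property $Q(u_1^n,u_2^n)\to0$ of the Palais--Smale sequence is indispensable, since it forces the residual gradient $|\nabla u_2^n|_2^2$ to vanish and thereby collapses $\ga$ onto $m_{p_1}^{\mu_1}(a_1)$, which the strict mountain-pass lower bound forbids. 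Throughout, the possible loss of $L^2$-mass is the recurring danger, controlled only through the signs of the Lagrange multipliers.
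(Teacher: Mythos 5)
Your proposal is correct and takes essentially the same route as the paper's own proof: you pass to the limit using $Q(u_1^n,u_2^n)\to 0$ to obtain the paper's identity \eqref{underweak}, use Lemma~\ref{Step5} and Lemma~\ref{Palais-Smale} to upgrade to strong convergence in each degenerate case, identify the limit with the scalar ground state via Lemma~\ref{lem-ground}, and derive the contradictions from Lemma~\ref{upperbound} (case $u_1=0$) and from $\ga(a_1,a_2)\ge\inf_B J>m_{p_1}^{\mu_1}(a_1)$ via Lemmas~\ref{intersection} and \ref{lem:end-point} (case $u_2=0$). The differences are cosmetic: you retain the mixed term in the limiting energy identity before specializing, replace the exact identification $\ga=m_{p_2}^{\mu_2}(a_2)$ by the sufficient inequality $I(u_2)\ge m_{p_2}^{\mu_2}(a_2)$ from the inf-definition of the ground state level, and make explicit, via the Pohozaev identity $Q(u_1,0)=0$ for the limit, the vanishing of $|\nabla u_2^n|_2^2$ that the paper leaves implicit in its Case~3.
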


\begin{proof}
Suppose by contradiction that at least one of $u_1$ or $u_2$ is zero. Then the strong convergence in $L^q(\R^N)$ for $q\in(2,2^*)$ implies
$$
\be \int_{\R^N} |u_1^n|^{r_1}|u_2^n|^{r_2} \, dx \to 0.
$$
Thus since $\{(u_1^n, u_2^n)\}$  satisfies $Q(u_1^n, u_2^n)  \to 0$ it follows that
\begin{equation}\label{underweak}
\begin{aligned}
J(u_1^n,u_2^n)
 &= \frac{\mu_1}{p_1}\left[\frac{N}{2}\left(\frac{p_1}{2}-1\right) -1\right] |u_1^n|_{p_1}^{p_1}
    + \frac{\mu_2}{p_2}\left[\frac{N}{2}\left(\frac{p_2}{2}-1\right)-1\right]|u_2^n|_{p_2}^{p_2} \\
 &= -D_1 |u_1|_{p_1}^{p_1} + D_2 |u_2|_{p_2}^{p_2} + o(1).
\end{aligned}
\end{equation}
where $D_1 >0$ and $D_2 >0$.  We now distinguish three cases: \medskip

\noindent{\it Case 1 : $u_1 =0$ and $u_2=0$.} \\
From (\ref{underweak}) we obtain that $J(u_1^n, u_2^n) \to 0$. Thus since we have assumed that $\gamma(a_1,a_2) \ne 0$ this case cannot occur. \medskip

\noindent{\it Case 2 : $u_1 =0$ and $u_2 \neq 0$.} \\
First note that by Lemma \ref{Step5}  we have $\la_2 <0$, hence $u_2^n \to u_2 \in S(a_2)$ strongly in $H$ as a consequence of Lemma~\ref{Palais-Smale}. Using $u_1=0$ it follows from (\ref{underweak}) that
\beq[1.11]
J(u_1^n,u_2^n) - I (u_2^n) \to 0 \quad \mbox{and} \quad J(u_1^n,u_2^n) \to D_2|u_2|_{p_2}^{p_2}.
\eeq
Since $(u_1,u_2)$ is a solution of the system \eqref{eq:1.1} we see that $u_2 \gneqq 0$ satisfies
$$- \De u - \la_2 u = \mu_2 |u|^{p_2 -2}u.$$
From Lemma \ref{lem-ground} and \eqref{eq:1.11} we deduce that $D_2|u_2|_{p_2}^{p_2} = m_{p_2}^{\mu_2}(a_2) >0$. Therefore in order to obtain a contradiction it suffices to show that $\gamma(a_1,a_2) < m_{p_2}^{\mu_2}(a_2).$  But this is immediate from Lemma \ref{upperbound} because $m_{p_1}^{\mu_1}(a_1)<0$. Thus case 2 is not possible. \medskip

\noindent{\it Case 3 : $u_1  \neq 0$ and $u_2 = 0.$} \\
As in case 2 we can show that $u_n^1 \to u_1 \in S(a_1) $ strongly in $H$. Now since $u_2 = 0$ it follows that
$$
J(u_1^n, u_2^n) - I(u_1^n)  \to 0 \quad \mbox{and} \quad J(u_1^n,u_2^n) \to - D_1 |u_1|_{p_1}^{p_1}.
$$
Arguing as in case 2 we identify $- D_1 |u_1|_{p_1}^{p_1}$ with the least energy level of
\begin{equation}\label{onedim1}
- \De u - \la_1 u =  \mu_1 |u|^{p_1 -2}u,
\end{equation}
namely
$$
- D_1|u_1|_{p_1}^{p_1} = m_{p_1}^{\mu_1}(a_1).
$$
Therefore in order to avoid that this case happens it suffices to show that
$\ga(a_1,a_2) > m_{p_1}^{\mu_1}(a_1) = I(\underline{u})$. But this is precisely what we can deduce from the lemmas~\ref{lem:def-c}, \ref{intersection} and the definition of $B$. \medskip

Having proved that the cases 1, 2 and 3 are both impossible this concludes the proof of the lemma.
\end{proof}

\begin{proof}[Proof of Theorem \ref{th:main}]
In view of the lemmas~\ref{Palais-Smale}, \ref{lem:PS-exist}, \ref{lem:PS-bound} and \ref{lem:ultimate}, in order  to establish the theorem it is enough to prove that $\ga(a_1,a_2) <0$. We see from Lemma~\ref{upperbound} that this is the case if $m_{p_1}^{\mu_1}(a_1) + m_{p_2}^{\mu_2}(a_2) <0.$ Note also that $u_1 >0$ and $u_2 >0$ follows directly from the strong maximum principle because $u_1\gneqq0$ and $u_2\gneqq0$.
\end{proof}

\begin{proof}[Proof of Corollary \ref{cor:main}]
From Lemma \ref{lem-ground} we know that $m_{p_1}^{\mu_1}(a_1) <0$ and $m_{p_1}^{\mu_1}(a_1) \to - \infty$ as $a_1 \to 0$. Also $m_{p_2}^{\mu_2}(a_2) >0$ and $m_{p_2}^{\mu_2}(a_2) \to 0$ as $a_2 \to \infty$. Therefore the corollary follows directly from Theorem \ref{th:main}.
\end{proof}

{\sc Address of the authors:}\\[1em]
\begin{tabular}{ll}
 Thomas Bartsch & Louis Jeanjean\\
 Mathematisches Institut & Laboratoire de Math\'ematiques (UMR 6623)\\
 Universit\"at Giessen & Universit\'{e} de Franche-Comt\'{e}\\
 Arndtstr.\ 2 & 16, Route de Gray\\
 35392 Giessen & 25030 Besan\c{c}on Cedex\\
 Germany & France\\
 Thomas.Bartsch@math.uni-giessen.de & louis.jeanjean@univ-fcomte.fr
\end{tabular}

\end{document}